% Template article for preprint document class `elsart'
% SP 2001/01/05

%\documentclass[review]{elsarticle}

 \documentclass[1p]{elsarticle}
 
 \usepackage[latin1]{inputenc}

\journal{Journal of \LaTeX\ Templates}

%% The amssymb package provides various useful mathematical symbols
\usepackage{amssymb}
\usepackage{eucal}
\newcommand{\R}{{\Bbb R}}

\newcommand{\N}{{\Bbb N}}

\usepackage{color}

\usepackage{xcolor}
\definecolor{alizarin}{rgb}{0.82, 0.1, 0.26}
\definecolor{burgundy}{rgb}{0.5, 0.0, 0.13}
\definecolor{darkblue}{rgb}{0.0, 0.0, 0.55}
\definecolor{deepcarmine}{rgb}{0.66, 0.13, 0.24}
\definecolor{navyblue}{rgb}{0.0, 0.0, 0.5}

%% The amsthm package provides extended theorem environments
\usepackage{amsmath}

\newtheorem{thm}{Theorem}
\newtheorem{lem}[thm]{Lemma}
\newtheorem{cor}[thm]{Corollary}
\newtheorem{proposition}[thm]{Proposition}

\newtheorem{example}[thm]{Example}
\newproof{proof}{Proof}
%%%%%%%%%%%%%%%%%%%%%%%
%% Elsevier bibliography styles
%%%%%%%%%%%%%%%%%%%%%%%
%% To change the style, put a % in front of the second line of the current style and
%% remove the % from the second line of the style you would like to use.
%%%%%%%%%%%%%%%%%%%%%%%

%% Numbered
%\bibliographystyle{model1-num-names}

%% Numbered without titles
%\bibliographystyle{model1a-num-names}

%% Harvard
%\bibliographystyle{model2-names.bst}\biboptions{authoryear}

%% Vancouver numbered
%\usepackage{numcompress}\bibliographystyle{model3-num-names}

%% Vancouver name/year
%\usepackage{numcompress}\bibliographystyle{model4-names}\biboptions{authoryear}

%% APA style
%\bibliographystyle{model5-names}\biboptions{authoryear}

%% AMA style
%\usepackage{numcompress}\bibliographystyle{model6-num-names}

%% `Elsevier LaTeX' style

\bibliographystyle{elsarticle-num}
%%%%%%%%%%%%%%%%%%%%%%%

\begin{document}
\begin{frontmatter}

\title{Two reasons for the appearance of  pushed wavefronts in the Belousov-Zhabotinsky system with spatiotemporal interaction}

\author[b]{Karel Has\'ik}
\author[b]{Jana Kopfov\'a}
\author[b]{Petra N\'ab\v{e}lkov\'a}
\author[c]{Olena Trofymchuk}
\author[d]{Sergei Trofimchuk\corref{mycorrespondingauthor}}
\cortext[mycorrespondingauthor]{\hspace{-7mm} {\it e-mails addresses}:  karel.hasik@math.slu.cz (Karel Has\'ik);   jana.kopfova@math.slu.cz (Jana Kopfov\'a); petra.nabelkova@math.slu.cz (Petra N\'ab\v{e}lkov\'a); o.trofimchyk@kpi.ua (Olena Trofymchuk); trofimch@inst-mat.utalca.cl (Sergei Trofimchuk) \\}
\address[b]{Mathematical Institute, Silesian University, 746 01 Opava, Czech Republic}
\address[c]{Department of Mathematical Physics and Differential Equations, Igor Sikorsky Kyiv Polytechnic Institute, Kyiv, Ukraine}
\address[d]{Instituto de Matem\'atica y F\'isica, Universidad de Talca, Casilla 747,
Talca, Chile }

\bigskip

\begin{abstract}
\noindent  We prove the existence minimal speed of propagation $c_*(r,b,K) \in [2\sqrt{1-r},2]$ for wavefronts in the Belousov-Zhabotinsky system with a spatiotemporal interaction defined by the convolution with (possibly,  "fat-tailed") kernel $K$.   The model is assumed to be monostable non-degenerate, i.e. $r\in (0,1)$.  The slowest wavefront  is termed pushed or non-linearly determined  if its velocity  $c_*(r,b,K) > 2\sqrt{1-r}$.  We show that $c_*(r,b,K)$ is close to 2 if i) positive system's parameter $b$ is sufficiently large or ii) if $K$ is spatially asymmetric to one side (e.g. to the left: in such a case, the influence of the right side concentration of the bromide ion on the  dynamics  is more significant than the influence of the left  side).  Consequently, this reveals two reasons for the appearance of pushed wavefronts in the  Belousov-Zhabotinsky reaction.  
 \end{abstract}
\begin{keyword} nonlocal delay, wavefront,  reaction-diffusion, Belousov-Zhabotinsky reaction  \\
{\it 2010 Mathematics Subject Classification}: {\ 34K12, 35K57,
92D25 }
% keywords here, in the form: keyword \sep keyword
\end{keyword}

\end{frontmatter}

\newpage

\section{Nonlinearly determined wavefronts in the Belousov-Zhabotinsky system}
\subsection{Introduction}  \label{intro} 
In this paper, we consider the monostable reaction-diffusion system 
\begin{equation}\label{1}
\begin{array}{ll}
     u_t(t,x) = \Delta u(t,x)  + u(t,x)(1-u(t,x)-r(K*v)(t,x)),
    &    \\
     v_t(t,x) = \Delta v(t,x)  -b u(t,x)v(t,x), \ u, v \geq 0, \ x \in \R^n,& 
\end{array}%
\end{equation}
following J.D. Murray  studies \cite{Mur1, Mur2} of traveling waves in the Noyes-Field theory of the Belousov-Zhabotinsky (BZ for short)  chemical reaction.  The variables $u, v$ represent the bromous acid and bromide ion concentrations, respectively.  We will assume that the system  (\ref{1}) is monostable non-degenerate that amounts to the condition $r \in (0,1)$, 
cf. \cite{TPTa}. The real parameter $b$ is positive (by  \cite{Troy}, $b \approx 20$  for a real chemical experiment). 
$K*v$ denotes the convolution of the component $v$ with the non-negative normalised kernel $K(s,y), \ s \geq 0, y \in \R$: 
$$
\int_0^{+\infty}\int_\R K(s,y)\;dy\;ds=1.
$$

The wavefront 
$(u,v) = (\phi, \theta)(\nu\cdot x + ct),$  $\|\nu\| =1,$ is a positive $C^2$-smooth solution of  (\ref{1})  satisfying the boundary conditions 
$$
(\phi, \theta)(-\infty)=(0,1), \ (\phi, \theta)(+\infty)
= (1,0). 
$$
Equivalently, the profiles $\phi(t), \psi(t):= 1-\theta(t)$ are $C^2$-smooth solutions to the system 
\begin{equation}\label{3ade}\left\{
\begin{array}{ll}
      \phi''(t) - c\phi'(t) + \phi(t) (1 - r- \phi(t)+ r(K\star \psi)(t)) =0,
    &    \\
     \psi''(t) - c\psi'(t) +b \phi(t)(1-\psi(t)) =0, & \\      \phi>0, \psi <1, \      \phi(-\infty)=\psi(-\infty)=0, \  \phi(+\infty)=\psi(+\infty)=1,  &\\ 
\end{array}%
\right.
\end{equation}
where 
$$
(K\star\psi)(t)= \int_0^{+\infty}\int_\R K(s,y)\psi(t-cs-y)\;dy\;ds
$$
(note that $K\star\psi$ depends  on $c$, sometimes we will also write $K\star_c\psi$ indicating explicitly this dependence). 
It follows that $\phi(t), \psi(t)$ are $C^{\infty}$-smooth functions. 

After Murray's original works, system (\ref{1}) was studied by many researchers, the existence of traveling waves being one of the problems of fundamental interest.  The first analytical result concerning this problem in the local case, $(K*v)(t,x) = v(t,x)$, was obtained by Troy \cite{Troy} in 1980. Later on, in a series of papers (e.g. see \cite{bn,Ka2,ma,TPTa,Volp,wz,yw,Zh}) different approaches were developed to tackle 
the wave existence problem in the mentioned local case and also in 
the delayed case,  $(K*v)(t,x) = v(t-h,x)$.  
One of the most recent articles \cite{DQ} established the existence of waves for two special non-local kernels $K$  (see Example \ref{ws} below) by invoking  Fenichel's geometric singular perturbation theory. Yet, as we show later in this section,  complete solution to the question of the existence of wavefront  for the BZ system is not available even in the local case.  Particularly, in the present work, we determine and analyse the main factors making the described problem so difficult.  In the  next subsections, we present and comment the main results of our studies. 

%%%%%%%%%%%%%%%%%%%%%%%%%%%%%%%%%%%%%%%%%%%%%%%%%%%%%%%%%%%%%%%%%

\subsection{Semi-wavefronts and their monotonicity}
Suppose that $(\phi,\psi)$ is a bounded smooth solution  to (\ref{3ade}) satisfying  weaker  positivity and boundary conditions $$
  \phi(-\infty)=\psi(-\infty)=0, \  \phi(t) \geq 0,\  t \in \R, \quad  \liminf_{t \to +\infty}\phi(t) >0, \quad \liminf_{t \to +\infty}\psi(t) >0. 
$$
We will call such a solution $(\phi,\psi)$  a semi-wavefront, this concept is rather natural for nonlocal systems since the nonlocal interaction can affect the monotonicity of wavefronts  \cite{FZ,MF}.   It is somewhat surprising that  model (\ref{1}) is robust in this respect: 
\begin{thm} \label{mude} Assume that  $r,b >0$.  If system (\ref{3ade}) has a  semi-wavefront  $(\phi, \psi)$,  then $\phi(+\infty) = \psi(+\infty)=1$, and $\psi'(t) >0$, $\phi'(t) >0$ for all $t \in \R$.  
\end{thm}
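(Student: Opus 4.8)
The plan is to exploit two structural features of (\ref{3ade}): the second equation is \emph{linear} in $\psi$ once $\phi$ is frozen, and the first equation, after division by $\phi$, reduces to a scalar Riccati equation for the logarithmic derivative $\phi'/\phi$ whose behaviour is governed by the (cooperative) monotonicity of the nonlocal coupling. Two standing remarks are used throughout: since $\liminf_{t\to+\infty}\phi>0$ and $\liminf_{t\to+\infty}\psi>0$, there are $\delta>0$ and $T\in\R$ with $\phi(t),\psi(t)\ge\delta$ for $t\ge T$, and all derivatives of $\phi,\psi$ are bounded on $\R$ by standard estimates. As a first step, $\phi>0$ on $\R$: $\phi\ge 0$ solves the linear homogeneous equation $\phi''-c\phi'+F(t)\phi=0$ with bounded $F(t):=1-r-\phi(t)+r(K\star\psi)(t)$, so a zero of $\phi$ at a finite point would be an interior minimum, hence a critical point, forcing $\phi\equiv 0$ by uniqueness and contradicting $\liminf_{t\to+\infty}\phi>0$.

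Next I treat $\psi$. Set $w:=1-\psi$, a bounded solution of the linear equation $w''-cw'-b\phi(t)w=0$ with $b\phi>0$. The function $-\psi$ satisfies $(-\psi)''-c(-\psi)'-b\phi(-\psi)=b\phi>0$, tends to $0$ at $-\infty$ and has negative $\limsup$ at $+\infty$, so a maximum-principle inspection forbids it to be nonnegative anywhere; hence $\psi>0$, i.e. $w<1$. Likewise $w$ has no nonpositive interior local minimum, and a Barbalat-type argument at $+\infty$ (using $\phi\ge\delta$ on $[T,\infty)$ and $w''=cw'+b\phi w$) excludes $w$ being negative near $+\infty$; thus $0<w<1$ on $\R$. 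Now $(e^{-ct}w')'=e^{-ct}b\phi w>0$, so $e^{-ct}w'$ is strictly increasing; were it eventually positive, $w$ would be increasing and, from $w''-cw'=b\phi w\ge b\delta\,w(T')>0$, would grow without bound --- impossible. Hence $w'<0$, i.e. $\psi'>0$ on $\R$, and $w$ decreases to some $\ell\ge 0$. Finally $w'\to 0$ (Barbalat), so $\int_T^{+\infty}b\phi w=[\,w'-cw\,]_T^{+\infty}<\infty$ while $b\phi w\ge b\delta\,\ell$ eventually; therefore $\ell=0$, i.e. $\psi(+\infty)=1$.

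For $\phi$: because $\psi'>0$ and $K\ge 0$ is normalized, $A(t):=1-r+r(K\star\psi)(t)$ is smooth with $A'(t)=r\,(K\star\psi')(t)>0$ for every $t$, and $A\to 1$ as $t\to+\infty$. Writing the first equation as $\phi''-c\phi'+\phi(A(t)-\phi)=0$ and putting $\phi'=Q\phi$ gives the Riccati equation $Q'=-Q^2+cQ-(A(t)-\phi(t))$; differentiating it, at any point $t_*$ with $Q'(t_*)=0$ one has $Q''(t_*)=-A'(t_*)+Q(t_*)\phi(t_*)$, which is $<0$ whenever $Q(t_*)\le 0$. Hence $Q$ has no nonpositive interior local minimum. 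A classical asymptotic analysis of $\phi''-c\phi'+F(t)\phi=0$ at $-\infty$, where $F\to 1-r$ and $0<\phi\to 0$, yields $\phi(t)\sim k\,e^{\lambda t}$ with $k>0$, $\lambda>0$, so $Q(-\infty)=\lambda>0$. If $Q$ were $\le 0$ somewhere, then --- since $Q(-\infty)>0$, and since on any unbounded interval where $Q<0$ the function $\phi$ would be decreasing with $\phi'\to 0$ and hence $Q\to 0$ at $+\infty$ --- the set on which $Q<0$ would be relatively compact in $\R$ with $Q=0$ on its boundary, producing a nonpositive interior minimum of $Q$, a contradiction. Therefore $Q>0$, i.e. $\phi'>0$ on $\R$. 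Then $\phi$ increases to $\phi_\infty:=\lim_{t\to+\infty}\phi>0$ and $\phi'\to 0$ (Barbalat), so letting $t\to+\infty$ in $\phi''=c\phi'+\phi(\phi-A)$ gives $\phi''\to\phi_\infty(\phi_\infty-1)$; since $\phi'\to 0$, this limit must vanish, whence $\phi_\infty=1$ and the proof is complete.

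The delicate point is the monotonicity of $\phi$: the first equation is of monostable (KPP) type, so its reaction is \emph{not} monotone in $\phi$ and the usual sliding/comparison machinery does not apply directly. The device that makes it work is the passage to the logarithmic derivative $Q=\phi'/\phi$: its Riccati equation, differentiated at a critical point, retains only the sign of $A'$ --- the cooperative monotonicity of the nonlocal term, already secured by $\psi'>0$ in the previous step --- and the sign of $Q$ itself, reducing $\phi'>0$ to a one-line maximum-principle argument. The remaining technical work --- the boundary asymptotics at $-\infty$ and the several Barbalat-type limiting arguments at $+\infty$ --- is routine but necessary.
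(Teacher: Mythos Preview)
Your argument follows a genuinely different route from the paper's. The paper works entirely by direct inspection of critical points: for $\psi$ it shows that a zero of $\psi'$ would be a strict local maximum, then locates an inflection point to the right and reads off a sign contradiction from the second equation; for $\phi$ (step (g)) it examines a hypothetical zero $s$ of $\phi'$, splits on the sign of $\phi''(s)$, and in each case uses the strict monotonicity of $K\star\psi$ (already secured in the $\psi$-step) to manufacture a contradiction at a nearby extremum or inflection. You instead track monotone auxiliary quantities --- the integrating-factor expression $e^{-ct}w'$ for $\psi$, and the Riccati variable $Q=\phi'/\phi$ for $\phi$. The Riccati device is attractive: differentiating at a critical point of $Q$ isolates precisely the cooperative input $A'>0$ and reduces $\phi'>0$ to a one-line second-derivative test. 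The paper's approach, by contrast, is entirely self-contained and needs no information about behaviour at $-\infty$.

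There is, however, one genuine gap. Your appeal to ``classical asymptotic analysis'' to obtain $Q(-\infty)=\lambda>0$ is not justified as stated: the standard asymptotic-integration theorems require an integrability hypothesis on $F(t)-(1-r)=-\phi(t)+r(K\star\psi)(t)$ that you have not verified, and in this paper such asymptotics are established only in Proposition~\ref{mudas}, whose proof \emph{uses} Theorem~\ref{mude} --- so invoking them here is circular. The fix stays within your Riccati framework: choose $T_0$ with $A(t)-\phi(t)>(1-r)/2$ for $t<T_0$; then whenever $Q(t)\le 0$ and $t<T_0$ the Riccati equation gives $Q'\le -Q^{2}-(1-r)/2<0$, so $Q$ decreases and, by comparison with $Q'=-Q^{2}$, blows up to $-\infty$ in finite forward time --- impossible since $Q=\phi'/\phi$ is globally defined. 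Hence $Q>0$ on a left half-line, which is all your argument needs. (A smaller imprecision: the ``Barbalat-type'' step does not by itself exclude the edge case $w<0$, $w'>0$, $w\to 0^{-}$; what actually closes it is that once $w$ first vanishes at $t_1$ with $w'(t_1)<0$, the monotonicity of $e^{-ct}w'$ forces $w'(t)<e^{c(t-t_1)}w'(t_1)\to-\infty$, contradicting boundedness. Combined with your ``no nonpositive local minimum'' observation this yields $w>0$ cleanly.)
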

We prove this Theorem in   Section \ref{Sec2}. \\

\noindent Let us present  two useful consequences of the above result:  

\begin{lem}  \label{mda} Let $(\phi,\psi)$ be a solution of  (\ref{3ade}) and $r \in (0,1), \ b >0$.  Then $\psi^2(t) < M\phi(t),$ $t \in \R,$  where $ M=1/(1-r)$ if $b\leq 0.5$ and $M=(1-r)^{-1}b^2/(2b-1)$ if $b>0.5$. 
\end{lem}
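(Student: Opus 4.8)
The plan is to control the scalar function $w(t) := M\phi(t) - \psi^2(t)$ and to show $w > 0$ on $\R$. The boundary values help: $w(-\infty) = 0$, while $w(+\infty) = M - 1 > 0$, because $r \in (0,1)$ gives $1/(1-r) > 1$ and, for $b > 0.5$, $b^2/(2b-1) \ge 1$ (since $(b-1)^2 \ge 0$), so $M > 1$ in both cases. Differentiating twice and substituting the two equations of (\ref{3ade}) produces the identity
\[
w'' - cw' = -M\phi\bigl(1 - r - \phi + r(K\star\psi)\bigr) - 2(\psi')^2 + 2b\,\phi\,\psi(1-\psi).
\]
Since a solution of (\ref{3ade}) is in particular a semi-wavefront, Theorem \ref{mude} applies and yields $\phi' > 0$, $\psi' > 0$ on $\R$; together with the constraints in (\ref{3ade}) this gives $\phi(t) > 0$, $\psi(t) \in (0,1)$ and $(K\star\psi)(t) > 0$ for every $t$.

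Suppose, for contradiction, that $w(t_1) \le 0$ for some $t_1$. Since $w$ is continuous with $w(-\infty) = 0$ and $w(+\infty) = M-1 > 0$, the infimum $\inf_\R w\,(\le 0)$ is attained at a finite point $t_0$ (a minimizing sequence cannot run off to $\pm\infty$, where $w$ tends to the limits $0$ and $M-1>0$; and if $\inf_\R w = 0$ it is realized already at $t_1$). At $t_0$ one has $w'(t_0) = 0$, $w''(t_0) \ge 0$ and $M\phi(t_0) \le \psi^2(t_0)$. Expanding $-M\phi(1-r-\phi+r(K\star\psi)) = -M\phi(1-r) + M\phi^2 - Mr\phi(K\star\psi)$, using $M\phi^2(t_0) = \phi(t_0)\,M\phi(t_0) \le \phi(t_0)\psi^2(t_0)$, and discarding the nonpositive term $-Mr\phi(t_0)(K\star\psi)(t_0)$, the identity evaluated at $t_0$ gives
\[
0 \le w''(t_0) - cw'(t_0) \le \phi(t_0)\bigl[\,g(\psi(t_0)) - M(1-r)\,\bigr] - 2\bigl(\psi'(t_0)\bigr)^2, \qquad g(s) := (1-2b)s^2 + 2bs.
\]

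The proof then reduces to the elementary estimate $M(1-r) \ge \max_{[0,1]} g$. This is a one-variable optimization: $g$ is convex for $b \le 1/2$, with $\max_{[0,1]} g = g(1) = 1$; concave but still increasing on $[0,1]$ for $1/2 < b \le 1$, again with maximum $1$; and concave with interior maximizer $s^* = b/(2b-1) \in (0,1)$ for $b > 1$, giving $\max_{[0,1]} g = g(s^*) = b^2/(2b-1)$. In every case the value $M(1-r)$ from the statement — equal to $1$, respectively to $b^2/(2b-1)$ — dominates $\max_{[0,1]} g \ge g(\psi(t_0))$, so the bracket is $\le 0$; combined with $-2(\psi'(t_0))^2 < 0$ (here Theorem \ref{mude} is used, through $\psi'(t_0) > 0$) this forces $w''(t_0) - cw'(t_0) < 0$, contradicting $w''(t_0) \ge 0$. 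Hence $w > 0$ on $\R$, that is, $\psi^2(t) < M\phi(t)$. The only genuinely delicate points are bookkeeping ones: checking that the nonlocal term $r(K\star\psi)$ enters with the favourable sign, drawing the strict inequality from the monotonicity of Theorem \ref{mude} rather than from the (non-strict) algebraic bound on $g$, and matching the piecewise definition of $M$ to $\max_{[0,1]} g$ across the three ranges of $b$.
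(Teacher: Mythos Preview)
The proposal is correct and follows essentially the same route as the paper: define the auxiliary function $M\phi-\psi^2$, locate a critical point where it is nonpositive, combine the two equations of (\ref{3ade}) there, drop the favourable terms $r(K\star\psi)$ and $(\psi')^2$, and reduce to the nonnegativity of the quadratic $M(1-r)-2b\psi+(2b-1)\psi^2$ on $(0,1)$ (your $g(\psi)\le M(1-r)$ is exactly the paper's $p(\psi)\ge 0$). Your three-case analysis of $g$ and the paper's two-case analysis of $p$ are equivalent, and both draw the strict contradiction from $\psi'>0$ via Theorem~\ref{mude}.
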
 
\begin{proof}
 Set  $z(t)= M\phi(t) - \psi^2(t)$ and suppose that $z(t)$ is non-positive at some points.  Clearly, $z(-\infty)=0,\  z(+\infty)=M -1 \geq 0, $ so that   $z(\tau)\leq 0, \ z'(\tau)= 0, \
z''(\tau)\geq 0$ at some $\tau \in \R$. Hence,
$$
M\phi(\tau) \leq \psi^2(\tau), \quad M\phi'(\tau) = 2\psi(\tau)\psi'(\tau), \quad 
M\phi''(\tau) \geq 2\psi(\tau)\psi''(\tau)+ 2(\psi'(\tau))^2, 
$$
$$
\begin{array}{ll}
     0\geq  2\psi(\tau)\psi''(\tau)+ 2(\psi'(\tau))^2- 2c\psi(\tau)\psi'(\tau) + M\phi(\tau) (1-r +r(K\star\psi)(\tau)- \phi(\tau)),
    &    \\
     0=2\psi(\tau)\psi''(\tau) - 2c\psi(\tau)\psi'(\tau) +2b\psi(\tau)\phi(\tau)(1-\psi(\tau)),  & \\      
\end{array}%
$$
and therefore, by using Theorem \ref{mude}, we obtain 
\begin{eqnarray*}
0&\geq &  2(\psi'(\tau))^2 + M\phi(\tau) (1-r +r(K\star\psi)(\tau)- \phi(\tau)) - 
2b\psi(\tau)\phi(\tau)(1-\psi(\tau)) \\
& > &  \phi(\tau)\left\{M(1-r) -2b\psi(\tau) +\psi^2(\tau)(2b-1)\right\}\geq 0,
\end{eqnarray*}
a contradiction. 
Note  that the polynomial $p(z):= M(1-r) -2bz +z^2(2b-1), $ $z := \psi(\tau) \in (0,1), $ satisfies 
$
p(0)= M(1-r) > 0, \ p(1) = M(1-r)-1 \geq 0, 
$
so that $p(\psi(\tau)) \geq 0$   if $2b -1 \leq 0$. If $2b-1 >0$
then  $p(z) \geq \min_{z \in \R}p(z)= M(1-r)  -  b^2(2b-1)^{-1} =0$. \qed
\end{proof}

\begin{proposition} \label{mudas} Assume that  $r \in (0,1),b >0$.  If system (\ref{3ade}) has a  semi-wavefront  $(\phi, \psi)$,  then $c \geq 2\sqrt{1-r}$ and there  exist  $t_1$, $m \in \{0,1\}$  such  that   
\begin{equation*}
(\psi(t+t_1), \phi(t+t_1),\phi'(t+t_1))= 
     (-t)^me^{\nu t}(b/(1-r), 1, \nu)(1+o(1)), \  t \to -\infty,
\end{equation*}
where  $\nu$ is  one of the zeros
% \in \{\lambda(c), \mu (c)\}$ 
of the characteristic polynomial $\chi_r(z,c) = z^2-cz+1-r.$
\end{proposition}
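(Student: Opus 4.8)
The plan is to analyse $(\phi,\psi)$ near the unstable state, i.e.\ as $t\to-\infty$, by linearisation. By Theorem~\ref{mude} the given semi-wavefront is in fact a wavefront, with $\phi,\psi$ increasing and $0<\phi,\psi<1$; hence $\phi(t)\to0$ and, by dominated convergence (using $\int_0^{+\infty}\!\int_\R K=1$ and $0\le\psi\le1$), $(K\star\psi)(t)\to0$ as $t\to-\infty$. Writing the first equation of (\ref{3ade}) as $\phi''-c\phi'+q(t)\phi=0$ with $q(t):=1-r-\phi(t)+r(K\star\psi)(t)\to1-r$, we see that the nonlocal term enters the linearisation only through the coefficient $q$, because $r\phi(K\star\psi)$ is quadratically small at the origin; similarly, near $-\infty$ the second equation reads $\psi''-c\psi'=-b\phi(t)(1-\psi(t))$.

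To get $c\ge2\sqrt{1-r}$: the substitution $\phi=e^{ct/2}u$ gives $u''+(q(t)-c^2/4)u=0$, and if $c^2<4(1-r)$ then $q(t)-c^2/4$ is bounded below by a positive constant for all sufficiently negative $t$, so by Sturm's comparison theorem $\phi$ has zeros accumulating at $-\infty$, contradicting $\phi>0$. Thus $c^2\ge4(1-r)$, and also $c\ge0$: if $c<0$ then $\phi''=c\phi'-q(t)\phi<0$ near $-\infty$ (using $\phi,\phi'>0$ and $q>0$ there), so $\phi'$ is positive and decreasing on some $(-\infty,T)$, whence $\phi(t)\le\phi(T)-\phi'(T)(T-t)\to-\infty$, which is impossible. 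Hence $c\ge2\sqrt{1-r}>0$, and the roots $\nu_-\le\nu_+$ of $\chi_r(\cdot,c)$ are real and positive (their product is $1-r$, their sum is $c$), coinciding precisely when $c=2\sqrt{1-r}$.

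For the asymptotics, one first secures a rough exponential bound $\phi(t)=O(e^{\delta t})$, $\delta>0$, near $-\infty$ (standard, e.g.\ from convexity of $e^{-ct/2}\phi$), and hence $\psi(t)=O(e^{\delta t/2})$ by Lemma~\ref{mda}; this makes the perturbation $q(t)-(1-r)=-\phi(t)+r(K\star\psi)(t)$ small. Setting $\phi=e^{\nu t}\rho$ with $\nu=\nu_-$ turns the $\phi$-equation into $\rho''+(2\nu-c)\rho'+(q(t)-(1-r))\rho=0$, whose coefficients tend to $2\nu-c\le0$ and $0$; Levinson-type asymptotic integration then shows that $\rho(t)$ converges either to a positive constant (so $\phi(t)\sim c_0e^{\nu_- t}$) or, if that limit is $0$, to a positive multiple of $e^{(c-2\nu)t}=e^{(\nu_+-\nu_-)t}$ (so $\phi(t)\sim c_0e^{\nu_+ t}$), while in the borderline case $c=2\sqrt{1-r}$ the double root produces an extra factor $(-t)^m$ with $m\in\{0,1\}$; the same result gives $\phi'(t)=c_0\nu(-t)^m e^{\nu t}(1+o(1))$. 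Inserting $\phi(t)=c_0(-t)^m e^{\nu t}(1+o(1))$ into $\psi''-c\psi'=-bc_0(-t)^m e^{\nu t}(1+o(1))$, whose homogeneous modes $1$ and $e^{ct}$ are respectively incompatible with $\psi(-\infty)=0$ and strictly faster-decaying than $e^{\nu t}$ (since $\nu\le\nu_+<c$), one finds $\psi$ asymptotic to a particular solution $A(-t)^m e^{\nu t}$, which by $\nu^2-c\nu=-(1-r)$ satisfies $A(1-r)=bc_0$; thus $\psi(t)=\frac{b}{1-r}\phi(t)(1+o(1))$. A translation $t\mapsto t+t_1$ normalising the leading coefficient of $\phi$ to $1$ converts these into the stated asymptotics.

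The main obstacle is the asymptotic-integration step, which is routine only when $q(t)-(1-r)\in L^1(-\infty,0)$. For a genuinely fat-tailed kernel $(K\star\psi)(t)$ may decay only polynomially at $-\infty$, this integrability can fail, and an unwanted sub-exponential factor $\exp(\int^t(q(s)-(1-r))\,ds)$ could replace the clean power $(-t)^m$. Excluding it requires either an exponential-moment hypothesis on the relevant (left) spatial tail of $K$ — under which $(K\star\psi)(t)$ itself decays exponentially at $-\infty$ and Levinson's theorem applies directly — or, failing that, a bootstrap improving the decay of $(K\star\psi)(t)$ from the exponential decay of $\phi,\psi$ and the monotonicity of $\psi$, together with a sharp enough asymptotic-integration criterion; pinning the polynomial factor down to degree exactly $m\in\{0,1\}$ in the double-root case is part of the same difficulty.
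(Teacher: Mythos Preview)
Your strategy is exactly the one the paper invokes: the paper's proof is literally the single sentence ``identical to the proof of Lemma~11 and Corollary~12 in \cite{TPTa}, where we use our Lemma~\ref{mda} instead of Theorem~6(A) in \cite{TPTa}.'' That cited argument proceeds, as you do, by (i) Sturm oscillation on $\phi''-c\phi'+q(t)\phi=0$ with $q(t)\to1-r$ to force $c\ge2\sqrt{1-r}$, (ii) Levinson--Hartman asymptotic integration to pin down the exponential rate $\nu\in\{\lambda_r(c),\mu_r(c)\}$ and the polynomial prefactor, and (iii) feeding the $\phi$-asymptotics into the purely local second equation to get $\psi\sim\frac{b}{1-r}\phi$. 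The replacement of Theorem~6(A) by Lemma~\ref{mda} is precisely your use of $\psi^2<M\phi$ to transfer an exponential bound from $\phi$ to $\psi$. One cosmetic point: your separate argument for $c>0$ is correct but unnecessary, since part~(e) of Theorem~\ref{mude}'s proof already gives $c=b\int_\R\phi(1-\psi)\,dt>0$.

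The concern you flag in your last paragraph is real and is not visibly resolved in the paper either. In \cite{TPTa} the interaction is a pure delay, $(K\star\psi)(t)=\psi(t-ch)$, so once $\psi$ decays exponentially the perturbation $q(t)-(1-r)$ is automatically in $L^1(-\infty,0)$ and Levinson applies cleanly. For a genuinely fat-tailed spatial kernel---say $K(s,y)=\delta(s)k(y)$ with $k$ Cauchy-like---one has $(K\star\psi)(t)\gtrsim 1/|t|$ as $t\to-\infty$ (coming from the region where $t-y$ sits near the plateau of $\psi$), so $q-(1-r)\notin L^1$ and the standard asymptotic-integration theorems yield only $\phi'(t)/\phi(t)\to\nu$, leaving room for a sub-exponential correction $\exp\!\bigl(\int^t O(1/|s|)\,ds\bigr)=|t|^{O(1)}$ rather than the stated $(-t)^m$, $m\in\{0,1\}$. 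The paper's proof-by-citation does not explain how this is excluded; your honesty about the gap is appropriate. Note, however, that for the applications inside the paper (e.g.\ the finiteness of $\lim_{t\to-\infty}\phi(t)/\phi'(t)$ used in Section~\ref{Sec4}, part~(a)) the weaker Hartman conclusion $\phi'/\phi\to\nu$ already suffices.
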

\begin{proof} The proof of this assertion is identical to the proof of Lemma 11 and Corollary 12 in \cite{TPTa}, where we use our Lemma \ref{mda}  instead of Theorem 6 (A) in  \cite{TPTa}.  
 \qed \end{proof}
 
 %%%%%%%%%%%%%%%%%%%%%%%%%%%%%%%%%%%%%%%%%%%%%%%%%%%%%%%%%%%%

\subsection{Pulled and pushed wavefronts in the BZ system: explanation through the KPP-Fisher equation with non-local interaction}

In the special case when $b=1-r$ and  $(K\star \psi)(t)=\psi(t),$ both equations in  (\ref{3ade}) coincide, i.e. $\phi=\psi$ %\cite{TPTa} 
and  we have 
\begin{equation}\label{pp}
\phi''(t)-c\phi'(t)+(1-r)\phi(t)(1-\phi(t))=0,\ 
0< \phi <1, \      \phi(-\infty)=0, \  \phi(+\infty)=1,
\end{equation}
which is precisely the profile equation for the classical KPP-Fisher model.  
The nonlinearity $g(\phi)= (1-r)\phi(1-\phi)$ is sub-tangential at $\phi=0$ (i.e.  $g(\phi) \leq g'(0)\phi, \ \phi \geq 0$), which guarantees the existence of a monotone wavefront to (\ref{pp}) if and only if $c \geq 2\sqrt{g'(0)} = 2\sqrt{1-r}=:c_*(r,1-r)$. The minimal speed $c_*(r,1-r)$ of wavefronts in the KPP-Fisher case is completely determined by the linearisation of equation  (\ref{pp}) along  zero, the minimal wavefronts are called pulled, or linearly determined.  

Without the additional restriction  $g(\phi) \leq g'(0)\phi$, the minimal speed $c_*$ of wavefronts can be bigger than $2\sqrt{g'(0)}$, in such a case, the wavefronts propagating with the minimal speed $c_*$ are called pushed. For instance, the following modification of the KPP-Fisher profile equation 
\begin{equation}\label{ppk}
\phi''(t)-c\phi'(t)+(1-r)(1+l \phi(t))\phi(t)(1-\phi(t))=0,\ 
    \phi(-\infty)=0, \  \phi(+\infty)=1,
\end{equation}
possesses monotone solutions if and only if 
\begin{equation}\label{cs}
c \geq c_*= \sqrt{1-r}\left\{
\begin{array}{ll}
    2,
    &    l\in [0,2] ,\\
     (l+2)/\sqrt{2l}, & l \geq 2, 
     \\   \end{array}%
\right.
\end{equation}
see \cite{HR} (a simple explanation for this form of $c_*$ can be found in \cite{KH, MFa}). In the nonlinearity $g_l(\phi)= (1-r)(1+l\phi)\phi(1-\phi)$, parameter $l$  measures "the excess" of the reaction graph  $y=g_l(\phi)$ over the tangent line $y=  g_l'(0)\phi, \ \phi \geq 0$. By (\ref{cs}), if this excess is relatively small, $l \in [0,2]$, the minimal waves are still linearly determined and if  $l >2$,  the minimal wavefronts are pushed. 

Now,  as we show  in Appendix, system (\ref{3ade}) can be transformed into the following equivalent KPP-Fisher type equation 
\begin{equation}\label{KLMMM}
  \phi''(t) - c\phi'(t) + \phi(t) (1 - r- \phi(t)+ r(K\star_c (\frak{L}_b \phi))(t)) =0, \phi >0,  \phi(-\infty)=0, \ \phi(\infty)=1, 
\end{equation}
where operator $\frak{L}_b$ maps the set of non-decreasing continuous functions  $\phi >0,$ \mbox{$\phi(-\infty)=0,$}  $\phi(+\infty)=1,$  possessing finite integral   $\int_{-\infty}^0\phi(t)dt$, into the set of strictly monotone $C^2$-smooth functions $\psi(t),$ $\psi(-\infty)=0, \ \psi(+\infty)=1$. Furthermore, 
$\frak{L}_b$ commutes with the translation operator, $(\frak{L}_b\phi(\cdot+h))(t)= (\frak{L}_b\phi(\cdot))(t+h)$, and is 
monotone increasing with respect to $b$ and $\phi$.  Monotonicity guarantees  a kind of  continuity  of $\frak{L}_b$ (see Appendix) and suggests a natural extension   of $\frak{L}_b$ to the fixed points $\phi=0$ and $\phi=1$. 

Clearly, the reaction term $g(\phi)= \phi(t) (1 - r- \phi(t)+ r(K\star_c (\frak{L}_b \phi))(t))$ is not sub-tangential  at $\phi=0$ whenever $\Lambda(t):=- \phi(t)+ r(K\star_c (\frak{L}_b \phi))(t) >0, \ t \leq t_0,$ for admissible wave profiles $\phi$ and some $t_0$. 
Since $\frak{L}_b \phi=\psi$, by Proposition \ref{mudas}, each non-critical pulled wavefront satisfies 
\begin{equation*}
( (\frak{L}_b \phi)(t), \phi(t))= 
     e^{\lambda_r(c) t}(b/(1-r), 1)(1+o(1)), \  t \to -\infty,
\end{equation*}
where  $0<\lambda_r(c)$ is the smallest  zero of the characteristic polynomial 
$\chi_r(z,c)=z^2-cz +1-r$. Thus  
$$
\Lambda(t) \sim  e^{\lambda_r(c) t}\left(-1 + \frac{br \kappa(r,c)}{1-r}\right), t \to -\infty, \ \mbox{where} $$
$$
 \kappa(r,c):= \int_0^{+\infty}\int_\R K(s,y)e^{-\lambda_r(c)(y+cs)}dy\; ds, 
$$
and, consequently, the difference 
$$
\frac{br \kappa(r,c)}{1-r}-1 =\frac{r+br \kappa(r,c)-1}{1-r} 
$$
can be regarded as a measure of "the excess" of the reaction term $y=g(\phi)$ over its linear part at the steady state 
$\phi=0$. In particular, we can expect that the minimal wavefront is linearly determined when $r+br \kappa(r,c)\leq 1$, and  is nonlinearly determined when $r+br \kappa(r,c)\gg1$.  Our next results support this informal conclusion.  
\begin{thm}\label{mumiV} Assume that  positive $r, b$ and $c \geq 2\sqrt{1-r}$ satisfy $r+br\kappa(r,c) \leq 1$. Then  there exists 
at least one positive monotone wavefront for (\ref{1}) propagating at the speed $c$. 
\end{thm}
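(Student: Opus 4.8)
The plan is to construct the wavefront as a fixed point of an integral operator, using upper and lower solutions together with a monotone iteration scheme adapted to the reformulation (\ref{KLMMM}). First I would rewrite (\ref{KLMMM}) as an integral equation: choosing $\sigma>0$ large enough that $\phi\mapsto \sigma\phi + g(\phi)$ (with $g(\phi)=\phi(1-r-\phi+r(K\star_c(\frak L_b\phi)))$) is monotone on the order interval $[0,1]$, I rewrite the profile equation as $\phi=\mathcal A\phi$, where $\mathcal A$ convolves $\sigma\phi+g(\phi)$ with the Green's kernel of $\frac{d^2}{dt^2}-c\frac{d}{dt}-\sigma$. Because $\frak L_b$ is monotone in $\phi$ and commutes with translations, $\mathcal A$ is a translation-commuting, order-preserving operator on a suitable set of bounded monotone profiles. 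The boundary conditions are encoded by seeking a nontrivial fixed point squeezed between an explicit pair of ordered sub/supersolutions.

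The key step is the construction of these sub- and supersolutions. For the supersolution I would take $\bar\phi\equiv 1$ (or a slight truncation of the solution of the linearised problem near $+\infty$), using the fact that $g(\phi)\le g'(0)\phi$-type control is not needed for the upper barrier — monotonicity of $\frak L_b$ gives $(K\star_c(\frak L_b\bar\phi))\le 1$ and hence $\bar\phi\equiv1$ is a supersolution. The delicate object is the subsolution. Here the hypothesis $r+br\kappa(r,c)\le 1$ enters: it guarantees that the reaction term is sub-tangential at $\phi=0$ in the relevant asymptotic sense, since the "excess" coefficient $\frac{br\kappa(r,c)}{1-r}-1$ is nonpositive. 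I would take $\underline\phi(t)=\max\{0,\,e^{\lambda_r(c)t}-Me^{\eta\lambda_r(c)t}\}$ for suitable $M>0$ and $\eta\in(1,\min\{2,\lambda_2/\lambda_1\})$, where $\lambda_r(c)$ is the smaller root of $\chi_r(z,c)$; the sub-tangency condition is exactly what makes the lower-order correction absorbable so that $\underline\phi$ is a genuine subsolution of (\ref{KLMMM}) for small $M$, and $\underline\phi\le\bar\phi$ after a translation.

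With the barriers in place I would run the monotone iteration $\phi_{n+1}=\mathcal A\phi_n$ starting from $\bar\phi$ (and from $\underline\phi$), obtaining a monotone-in-$n$ sequence of profiles trapped in $[\underline\phi,\bar\phi]$; by the dominated convergence theorem and the continuity properties of $\frak L_b$ recorded in the Appendix, the limit $\phi_*$ is a fixed point, hence a $C^2$ (indeed $C^\infty$) solution of (\ref{KLMMM}) with $0<\phi_*\le 1$ and $\phi_*(-\infty)=0$. Positivity of $\underline\phi$ on a half-line forces $\liminf_{t\to+\infty}\phi_*(t)>0$, and setting $\psi_*=\frak L_b\phi_*$ gives a semi-wavefront of (\ref{3ade}); Theorem \ref{mude} then upgrades it to a genuine monotone wavefront with $\phi_*(+\infty)=\psi_*(+\infty)=1$ and $\phi_*'>0$, which (via the Appendix equivalence) yields the wavefront for (\ref{1}).

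The main obstacle I anticipate is verifying that $\mathcal A$ genuinely maps the order interval $[\underline\phi,\bar\phi]$ into itself and is compact/continuous in the presence of the nonlocal, $c$-dependent operator $\frak L_b$: the term $K\star_c(\frak L_b\phi)$ is a composition of two nonlocal operations, and for fat-tailed $K$ one must control the convolution tails carefully when checking that the subsolution inequality survives. Establishing the requisite continuity of $\frak L_b$ (monotonicity plus the estimate of Lemma \ref{mda}, which bounds $\psi^2$ by $M\phi$ and thus ties the decay of $\psi$ to that of $\phi$) is what makes the limiting argument legitimate; everything else is a fairly standard super/subsolution plus monotone-iteration routine.
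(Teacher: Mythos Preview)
Your architecture (integral reformulation plus monotone iteration between ordered barriers) is the right template, but two concrete problems prevent the argument from closing, and they are tied to a misreading of where the hypothesis $r+br\kappa(r,c)\le 1$ actually enters.

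First, the operator $\frak{L}_b$ of the Appendix is defined only on \emph{nondecreasing} profiles $\phi$ with $\phi(+\infty)=1$ and $\int_{-\infty}^0\phi<\infty$ (Lemma \ref{OWL}). Your proposed subsolution $\underline\phi(t)=\max\{0,e^{\lambda_r t}-Me^{\eta\lambda_r t}\}$ is non-monotone and vanishes at $+\infty$, so $\frak{L}_b\underline\phi$ is simply undefined and you cannot even write down $g(\underline\phi)$, much less verify a subsolution inequality. The scalar reformulation (\ref{KLMMM}) is used in the paper only as heuristic motivation; the actual proof iterates on the \emph{pair} $(\phi,\psi)$ via operators $\mathcal N_1,\mathcal N_2$ built directly from system (\ref{3ade}), precisely to avoid this domain obstruction.

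Second, you have the role of the hypothesis backwards. Sub-tangency means the nonlinear remainder $g(\phi)-(1-r)\phi$ is \emph{nonpositive} near $0$; this works \emph{against} a subsolution inequality, not for it (the correction $-Me^{\eta\lambda_r t}$ handles the $O(e^{2\lambda_r t})$ remainder regardless of its sign, for large $M$, not small). In the paper the hypothesis is what makes the \emph{upper} barrier work: with $b_1=b/(1-r)$ and $(\phi_+,\psi_+)=(e^{\lambda_r t},b_1e^{\lambda_r t})$ one gets $\mathcal D_1(\phi_+,\psi_+)(t)=-e^{2\lambda_r t}\bigl(1-rb_1\kappa(r,c)\bigr)\le 0$ exactly when $r+br\kappa(r,c)\le 1$; the truncations $\Phi_+=\min\{1,e^{\lambda_r t}\}$, $\Psi_+=\min\{1,b_1e^{\lambda_r t}\}$ then furnish the super-pair (Lemma \ref{Le2}). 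Your choice $\bar\phi\equiv 1$ is a fixed point of the iteration and gives no decay control at $-\infty$. For the lower pair the paper uses $(\Phi_-,\Psi_-)=(\text{KPP--Fisher front of }\phi''-c\phi'+\phi(1-r-\phi)=0,\ 0)$, which needs no extra hypothesis; the critical speed $c=2\sqrt{1-r}$ is then reached by a compactness/limit argument along $c_j\downarrow c$, a step your outline also omits.
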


\noindent Proof of this Theorem is postponed to  Section \ref{Exi}. \qed \\

\noindent Actually, the use of the concept `minimal speed of propagation' or `minimal' or `critical' wavefront with respect to the system (\ref{1}) should be rigorously justified. This work, which is technically the most difficult part of the paper,  is done in Section \ref{Sec4}, where the following result is established.   
\begin{thm}\label{mumiWWW} For every $r \in (0,1)$ and $b>0$ there exists a positive real number $c_*(r,b,K) \in [2\sqrt{1-r},2]$ such that system  (\ref{1}) has at least one positive monotone wavefront propagating with the speed $c$ if and only if $c \geq c_*(r,b,K)$.  
\end{thm}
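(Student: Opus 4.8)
The plan is to establish Theorem \ref{mumiWWW} by identifying the admissible set of speeds, showing it is a half-line, and then bounding its endpoint from both sides. Concretely, define
$$
\mathcal{C} := \{ c > 0 : \text{(\ref{1}) admits a positive monotone wavefront propagating at speed } c \}.
$$
By Proposition \ref{mudas}, every $c \in \mathcal{C}$ satisfies $c \ge 2\sqrt{1-r}$, so $\mathcal{C} \subseteq [2\sqrt{1-r}, +\infty)$. The first main step is to prove that $\mathcal{C}$ is an interval unbounded to the right, i.e. that if $c_0 \in \mathcal{C}$ then $[c_0, +\infty) \subseteq \mathcal{C}$. For this I would exploit the equivalent scalar formulation (\ref{KLMMM}) together with the monotonicity properties of $\frak{L}_b$ recorded in the excerpt: the reaction term $g(\phi) = \phi(1-r-\phi + r(K\star_c(\frak{L}_b\phi)))$ depends on $c$ both through $K\star_c$ and through $\frak{L}_b$, and one checks that larger $c$ makes the problem "easier" in the usual monostable sense (the nonlocal term $K\star_c\psi$ evaluates $\psi$ further in the past, where $\psi$ is smaller, so increasing $c$ only lowers the effective reaction). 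A clean way to make this rigorous is a sub/supersolution (or sliding/monotone iteration) argument: from a wavefront at speed $c_0$ one builds a supersolution for the problem at speed $c > c_0$, while a small exponential $\min(1, A e^{\lambda_r(c)t})$-type bump serves as a subsolution, and the monotone iteration scheme in an ordered Banach space (as in \cite{TPTa}) produces a monotone wavefront at speed $c$. Hence $c_*(r,b,K) := \inf \mathcal{C}$ is well defined and $\mathcal{C} \supseteq (c_*(r,b,K), +\infty)$.

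The second main step is the two-sided bound $c_*(r,b,K) \in [2\sqrt{1-r}, 2]$. The lower bound $c_*(r,b,K) \ge 2\sqrt{1-r}$ is immediate from $\mathcal{C} \subseteq [2\sqrt{1-r}, +\infty)$. For the upper bound $c_*(r,b,K) \le 2$ I would show directly that every $c > 2$ belongs to $\mathcal{C}$ (and then handle $c = 2$ at the closing step). The cleanest route is a comparison with the KPP-Fisher equation whose linearization at $\phi = 0$ has exponent rate $1$: dropping the (nonnegative, since $r, K, \psi \ge 0$) convolution term gives the lower bound $g(\phi) \ge \phi(1 - r - \phi) \cdot$, which is not quite what is needed; instead one uses the full $g$ but notes $g(\phi) \le \phi(1-r) + r\phi(K\star_c\psi)(t) \le \phi(1-r) + r\phi \le \phi$ because $\psi \le 1$ and $K$ is normalized, so $g'(0)$-type domination by the linear function $\phi \mapsto \phi$ holds at the upper steady state behaviour — hence the characteristic equation $z^2 - cz + 1 = 0$ (rate $1$, not $1-r$) governs the decay, and $c > 2$ gives real negative roots. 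With this sub-tangency bound $g(\phi) \le \phi$ in hand, the standard monostable machinery (Wu-Zou / monotone iteration between the constant supersolution $1$ adjusted by $\min(1, e^{\mu t})$ with $\mu^2 - c\mu + 1 = 0$ and a suitable subsolution built from the already-available linear-determinacy regime) yields a monotone wavefront for every $c > 2$, so $c_*(r,b,K) \le 2$.

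The third step is to close the endpoint: show $c_*(r,b,K) \in \mathcal{C}$, i.e. the infimum is attained. This is the classical limiting argument. Take a sequence $c_n \downarrow c_* := c_*(r,b,K)$ with wavefronts $(\phi_n, \psi_n)$, normalize by translation (e.g. $\phi_n(0) = 1/2$), and extract a locally uniform limit using the a priori $C^2_{loc}$ bounds that come from the equation itself together with the uniform bounds $0 < \phi_n, \psi_n < 1$ and monotonicity (Helly's theorem gives pointwise convergence of the monotone profiles, and then elliptic/ODE regularity upgrades it). The limit $(\phi_*, \psi_*)$ solves (\ref{3ade}) at speed $c_*$ with $0 \le \phi_*, \psi_* \le 1$ monotone; the remaining issue is to rule out degenerate limits where $\phi_* \equiv 0$ or $\phi_* \equiv 1$ or the limit is only a semi-wavefront with $\phi_*(+\infty) < 1$. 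The normalization $\phi_*(0) = 1/2$ and monotonicity exclude the constant cases, and Theorem \ref{mude} (applied to the limit, which is at least a semi-wavefront once one checks $\liminf_{+\infty}\phi_* > 0$ and $\liminf_{+\infty}\psi_* > 0$ — these follow from the uniform lower bounds transported through the limit, e.g. via Lemma \ref{mda} and the monotone-iteration lower barriers) forces $\phi_*(+\infty) = \psi_*(+\infty) = 1$ and strict monotonicity. Hence $c_* \in \mathcal{C}$ and $\mathcal{C} = [c_*, +\infty)$, which is precisely the claimed statement.

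I expect the main obstacle to be the third step — proving the infimum is attained and, within it, ruling out that the limiting profile collapses to a semi-wavefront that fails to reach the state $(1,0)$ at $+\infty$ or that loses exponential control at $-\infty$ as $c_n \to c_*$. The nonlocal convolution $K\star_c\psi$ is the source of the difficulty: with a possibly fat-tailed kernel $K$ the tail behaviour of the profiles at $-\infty$ is delicate (the asymptotics in Proposition \ref{mudas} hold for non-critical fronts but the limiting front is exactly critical), and one must secure uniform-in-$n$ estimates that survive the passage to the limit. This is presumably why the excerpt flags Section \ref{Sec4} as "technically the most difficult part of the paper"; the monotonicity and boundedness lemmas (Theorem \ref{mude}, Lemma \ref{mda}, Proposition \ref{mudas}) are exactly the tools that make the compactness argument go through, by furnishing $n$-independent bounds and by guaranteeing that any non-trivial limit is automatically a genuine wavefront.
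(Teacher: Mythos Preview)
Your three-step architecture --- (i) show the admissible set $\mathcal{C}$ is an upward half-line, (ii) show $[2,\infty)\subset\mathcal{C}$, (iii) show $\inf\mathcal{C}$ is attained --- is exactly the paper's plan. But you have located the difficulty in the wrong place, and the gap is substantive.

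Step (iii), which you flag as the ``main obstacle,'' is in fact routine: it is the same normalize--extract--pass-to-the-limit argument already carried out in Step~2 of the proof of Theorem~\ref{mumiV}, with Lemma~\ref{mda} controlling $\psi_*(-\infty)$. Nothing new happens there, fat tails or not.

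The real difficulty is hidden in your steps (i) and (ii), and your proposed sub/super pairs do not work. For (ii) you propose the super-solution $\min(1,e^{\lambda_0(c)t})$ with $\lambda_0(c)$ the smaller root of $z^2-cz+1=0$, and as sub-solution the KPP--Fisher front $\Phi_-$ for the reduced nonlinearity $(1-r)\phi(1-\phi)$. But $\Phi_-(t)\sim e^{\lambda_r(c)t}$ at $-\infty$ with $\lambda_r(c)<\lambda_0(c)$, so $\Phi_-(t)/e^{\lambda_0(c)t}\to+\infty$ and the ordering $\Phi_-\le\Phi_+$ fails for large negative $t$. Exactly the same mismatch occurs in (i): a wavefront at speed $c_0$ decays at rate $\lambda_r(c_0)$ or $\mu_r(c_0)$, both strictly larger than $\lambda_r(\tilde c)$ for $\tilde c>c_0$, so again your super-solution decays faster than your sub-solution. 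The monotone iteration cannot start.

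The paper's resolution --- and this is what makes Section~\ref{Sec4} ``technically the most difficult part'' --- is to append to the naive super-solution a carefully tuned correction $\phi_\epsilon(t)=\eta(t)\sum_{j=1}^{k}a_j\epsilon^j e^{j\lambda_r t}$ (and similarly $\psi_\epsilon$), where the coefficients $a_j,b_j$ are determined by a recursive cancellation (equations (\ref{aj})--(\ref{bj})) and $k$ is the largest integer with $k\lambda_r\le\lambda_0$. This forces the super-solution to decay at the correct slow rate $\lambda_r$ while preserving the differential inequalities; the construction is first done for compactly supported $K$ and then passed to general $K$ by approximation. For step (i) the paper additionally scales the given wavefront by $\sigma>1$ close to $1$ (not $\sigma=1$) to obtain strict inequalities, and then adds the same type of correction. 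None of this machinery appears in your outline, and without it the argument does not close.
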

Theorems \ref{mumiV} and \ref{mumiWWW} imply the following 
\begin{cor}  Assume that  $r, b>0$  satisfy $r+br\kappa(r,2\sqrt{1-r}) \leq 1$. 
Then $c_*(r,b,K)= 2\sqrt{1-r}$.  
\end{cor}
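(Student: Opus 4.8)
The plan is to sandwich $c_*(r,b,K)$ between $2\sqrt{1-r}$ from below and $2\sqrt{1-r}$ from above, using only the two existence results already proved. The lower bound costs nothing: Theorem~\ref{mumiWWW} gives $c_*(r,b,K)\in[2\sqrt{1-r},2]$, hence $c_*(r,b,K)\geq 2\sqrt{1-r}$. So everything reduces to exhibiting one positive monotone wavefront of (\ref{1}) propagating with the threshold speed $c=2\sqrt{1-r}$: once such a wavefront is available, the ``only if'' direction of Theorem~\ref{mumiWWW} forces $c_*(r,b,K)\leq 2\sqrt{1-r}$, and the two bounds coincide.

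To get that wavefront I would apply Theorem~\ref{mumiV} at the single value $c=2\sqrt{1-r}$. Its hypotheses are met exactly: the requirement $c\geq 2\sqrt{1-r}$ holds with equality, and the requirement $r+br\kappa(r,c)\leq 1$ read at $c=2\sqrt{1-r}$ is verbatim the standing assumption of the Corollary. The one point deserving a word is the meaning of $\kappa(r,2\sqrt{1-r})$ for possibly fat-tailed $K$: at this speed $\chi_r(z,2\sqrt{1-r})=z^2-2\sqrt{1-r}\,z+(1-r)$ has the double zero $\lambda_r=\sqrt{1-r}$, so $\kappa(r,2\sqrt{1-r})=\int_0^{+\infty}\int_\R K(s,y)e^{-\sqrt{1-r}\,(y+2\sqrt{1-r}\,s)}\,dy\,ds$; the Corollary's inequality is vacuous unless this exponential moment of $K$ is finite, and when it is, Theorem~\ref{mumiV} applies without change and yields a positive monotone wavefront at $c=2\sqrt{1-r}$.

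Combining the pieces: by Theorem~\ref{mumiWWW} the admissible set of wave speeds for (\ref{1}) is precisely $[c_*(r,b,K),+\infty)$; the wavefront constructed in the previous step shows $2\sqrt{1-r}$ lies in this set, so $c_*(r,b,K)\leq 2\sqrt{1-r}$, and together with the lower bound $c_*(r,b,K)=2\sqrt{1-r}$. In particular, under the stated condition the slowest BZ wavefront is pulled (linearly determined).

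I do not expect any genuine obstacle here, since the analytic content sits entirely in Theorems~\ref{mumiV} and~\ref{mumiWWW} (existence at a prescribed sub-tangential speed, and the threshold structure of the speed set). The one place calling for care is the endpoint: one must be sure Theorem~\ref{mumiV} was formulated so as to include $c=2\sqrt{1-r}$ (it was, since it allows $c\geq 2\sqrt{1-r}$), and that $\kappa(r,\cdot)$ is evaluated there with the double root $\lambda_r=\sqrt{1-r}$, so that the hypothesis $r+br\kappa(r,2\sqrt{1-r})\leq 1$ is both meaningful and, implicitly, guarantees the relevant finite exponential moment of the kernel.
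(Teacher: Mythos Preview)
Your argument is correct and matches the paper's own reasoning: the Corollary is stated there as an immediate consequence of Theorems~\ref{mumiV} and~\ref{mumiWWW}, with no further proof given. Your sandwich---lower bound from Theorem~\ref{mumiWWW}, upper bound by applying Theorem~\ref{mumiV} at $c=2\sqrt{1-r}$ and invoking the ``only if'' direction of Theorem~\ref{mumiWWW}---is exactly the intended deduction.
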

\begin{example}\label{ws}
The special kernels 
$$
K_w(s,y)=\frac{1}{\sqrt{4\pi s}}e^{-\frac{y^2}{4s}}\frac{1}{\tau}e^{-\frac{s}{\tau}}, \quad 
K_f(s,y)=\frac{1}{\sqrt{4\pi s}}e^{-\frac{y^2}{4s}}\frac{s}{\tau^2}e^{-\frac{s}{\tau}}.
$$
are called the weak delay kernel and the strong delay kernel, respectively. They are used to model  delayed systems with  nonlocal spatial effects, e.g. see \cite{KKNT-20} and references therein. In particular, the BZ system with the weak kernel was discussed in \cite{DQ}.   For the kernel $K_w$,  Theorem 1 in \cite{DQ}  establishes the existence of heteroclinic connections between the equilibria $(0,0)$ and $(1,1)$  when $r+b\leq 1$ and  $\tau>0$ is sufficiently small. Note that the monotonicity and positivity properties of these heteroclinics were not discussed in \cite{DQ}. 
Theorem \ref{mumiV} provides a significant improvement of this result. Indeed,  
a straightforward computation 
for the weak delay kernel yields 
$$
\kappa(r,c)=\frac{1}{1+\tau\lambda(c-\lambda)}=\frac{1}{1+\tau(1-r)}
$$
so that  conditions $r+br\kappa(r,c) \leq 1$, $c \geq 2\sqrt{1-r}$, take the  form
\begin{equation}\label{weakkernel}
r+\frac{br}{1+\tau(1-r)} \leq 1, \quad c \geq 2\sqrt{1-r}.
\end{equation}
For $\tau=0$ this result coincides with the ones from \cite[Theorem 3]{Ka2} and from \cite[Theorem 4.2]{yw}. If  $\tau > 0$ is sufficiently large, condition (\ref{weakkernel}) is  satisfied and the minimal wavefront is linearly determined. This goes in hand with the general observation that the time delay decreases the minimal speed of propagation in the monostable models with delays. 
Now, in Example \ref{wsc} of the last section we analyse numerically the minimal speed $c_*(b,3/4,\tau):= c_*(b,3/4,K_w)$ also for $b > 1/3+\tau/12$, i.e. for the case  when 
the first condition in (\ref{weakkernel}) is not necessarily met. By our computations, even for relatively small values of $b$ the minimal wavefronts seem to be non-linearly determined. Theorem \ref{dac3} below explains this phenomenon by establishing it rigorously  for sufficiently large $b$. 

Furthermore, we  get a similar result when we deal with the strong delay kernel. Calculations now provide 
$$
\kappa(r,c)=\frac{1}{(1+\tau\lambda(c-\lambda))^2}=\frac{1}{(1+\tau(1-r))^2}, 
$$
so that at least one wavefront for system (\ref{1}) considered with the kernel $K_f$ exists  by Theorem 4 if
\begin{equation*}\label{strongkernel}
r+\frac{br}{(1+\tau(1-r))^2} \leq 1, \quad c \geq 2\sqrt{1-r}. 
\end{equation*}
\end{example}

Finally, we consider the situations when a)   $b\to +\infty$ and b) $\kappa(r,c)\to +\infty$. In each  of them, $r+br \kappa(r,c)\gg1$ so that it is reasonable to expect that the respective minimal wavefronts are  not linearly determined. 

\begin{thm}  \label{dac3} Set $K_a(s,y)=K(s,y+a)$ and fix $b>0, r \in (0,1)$.  Then $c_*(r,b,K_a) \to 2$ as $a \to +\infty$. Next, fix some $r\in (0,1)$ and suppose that the kernel $K(s,y)$ is a continuous function of $s$ and $ y$. Then $c_*(r,b,K) \to 2$ as $b \to +\infty$. Thus the critical 
wavefront  is necessarily  pushed when either $b$ or $a$ is sufficiently large positive number. 
\end{thm}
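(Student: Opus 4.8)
\noindent\emph{Proof idea.} Theorem~\ref{mumiWWW} already gives $c_*(r,b,K)\le 2$ for all admissible data, so the plan is to establish the reverse inequality in the limit: $\liminf_{a\to+\infty}c_*(r,b,K_a)\ge 2$ and $\liminf_{b\to+\infty}c_*(r,b,K)\ge 2$. Equivalently, I fix a speed $c\in[2\sqrt{1-r},2)$ and show that, once $a$ (respectively $b$) is large, system~(\ref{3ade}) has \emph{no} wavefront propagating at the speed $c$, which by Theorem~\ref{mumiWWW} means $c<c_*$. I argue by contradiction: assume there are $a_n\to+\infty$ (resp.\ $b_n\to+\infty$) and wavefronts $(\phi_n,\psi_n)$ of~(\ref{3ade}) at the speed $c$ with kernel $K_{a_n}$ (resp.\ parameter $b_n$). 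By Theorem~\ref{mude} each $\phi_n,\psi_n$ is strictly increasing; after a translation I normalise $\phi_n(0)=1/2$. Since $0<\phi_n,\psi_n<1$, the first equation of~(\ref{3ade}) has a uniformly bounded right-hand side, so interior estimates give uniform bounds for $\phi_n'$ and $\phi_n''$, and along a subsequence $\phi_n\to\phi_\infty$ in $C^1_{loc}(\R)$, with $\phi_\infty$ non-decreasing, $0\le\phi_\infty\le 1$ and $\phi_\infty(0)=1/2$.

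The decisive step is to prove that the nonlocal term tends to the constant $1$, locally uniformly in $t$. Because $\phi_n(t)\ge\phi_n(0)=1/2$ for $t\ge 0$, the function $w_n:=1-\psi_n$ satisfies $w_n''-cw_n'=b\phi_nw_n\ge\tfrac b2\,w_n$ on $[0,+\infty)$ (with $b=b_n$ in the second scenario); comparing $w_n$ with $e^{-\rho t}$, $\rho:=\tfrac12(\sqrt{c^2+2b}-c)>0$, and using $w_n(0)<1$, I obtain $1-\psi_n(t)\le e^{-\rho t}$ for all $t\ge 0$. In the shift case $b$, hence $\rho$, is fixed and $(K_{a_n}\star\psi_n)(t)=(K\star\psi_n)(t+a_n)$; picking $R$ with $\int\!\!\int_{cs+y\le R}K\,dy\,ds>1-\vare$ and keeping only this part of the convolution, one gets $(K_{a_n}\star\psi_n)(t)\ge(1-\vare)\bigl(1-e^{-\rho(a_n-R-L)}\bigr)$ for $|t|\le L$, so $(K_{a_n}\star\psi_n)\to 1$ in $C_{loc}(\R)$. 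In the case $b_n\to+\infty$ I first note that each $\phi_n>0$ solves $\phi_n''-c\phi_n'+q_n\phi_n=0$ with $q_n:=1-r-\phi_n+r(K\star\psi_n)$ uniformly bounded, so the Harnack inequality gives $\inf_{[-L,L]}\phi_n\ge\delta(L)>0$ uniformly in $n$; then on $[-L,+\infty)$ the same comparison applies with $\rho$ replaced by $\tfrac12(\sqrt{c^2+4b_n\delta(L)}-c)\to+\infty$, so $\psi_n(t)\to 1$ for every fixed $t$, and since $\psi_n$ is increasing and $K\in L^1$, dominated convergence yields $0\le 1-(K\star\psi_n)(t)\le 1-(K\star\psi_n)(-L)\to 0$ uniformly on $|t|\le L$ (the continuity of $K$ assumed in this case is used here).

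With this in hand I pass to the limit in the first equation of~(\ref{3ade}): $\phi_n\to\phi_\infty$ in $C^1_{loc}$ and $(K\star\psi_n)\to 1$ in $C_{loc}$ imply that $\phi_n''=c\phi_n'-\phi_n(1-r-\phi_n+r(K\star\psi_n))$ converges in $C_{loc}$, hence $\phi_\infty\in C^2(\R)$ and
\begin{equation*}
\phi_\infty''(t)-c\phi_\infty'(t)+\phi_\infty(t)(1-\phi_\infty(t))=0,\qquad t\in\R .
\end{equation*}
Since $\phi_\infty'\ge 0$ is bounded with bounded derivative and $\int_\R\phi_\infty'<\infty$, it tends to $0$ at $\pm\infty$, and then so does $\phi_\infty''$; therefore $\phi_\infty(\pm\infty)\in\{0,1\}$, and monotonicity together with $\phi_\infty(0)=1/2$ forces $\phi_\infty(-\infty)=0$, $\phi_\infty(+\infty)=1$. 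Thus $\phi_\infty$ would be a monotone travelling front of the KPP--Fisher equation $\phi''-c\phi'+\phi(1-\phi)=0$, which exists only for $c\ge 2$ — contradicting $c<2$. This proves both limits and, recalling $c_*(r,b,K)\le 2$, the Theorem. I expect the real obstacle to be precisely the middle step: a priori $\psi_n$ may steepen without bound (as $b_n\to+\infty$) or its transition layer may drift to $-\infty$ relative to that of $\phi_n$ (as $a_n\to+\infty$), and controlling $K\star\psi_n$ in the limit is exactly what the supersolution comparison and the uniform Harnack lower bound for $\phi_n$ are meant to handle.
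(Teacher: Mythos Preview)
Your overall strategy matches the paper's exactly: assume by contradiction that some speed $c_1<2$ is admissible along a sequence $a_n\to+\infty$ (resp.\ $b_n\to+\infty$), normalise $\phi_n(0)=1/2$, extract a $C^1_{loc}$--limit $\phi_\infty$, show that the nonlocal term tends to $1$, and conclude that $\phi_\infty$ is a KPP--Fisher front at speed $c_1<2$, which is impossible. Your argument is correct.

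The genuine difference lies in the ``decisive step'', i.e.\ how one forces $(K\star\psi_n)\to 1$. The paper does this by integrating the second equation of~(\ref{3ade}) over $\R$ to get the identity $\int_\R \phi_n(1-\psi_n)\,dt = c_1/b$ and then invoking Fatou's lemma. In the case $b_n\to+\infty$ this immediately gives $\int_\R\phi_*(1-\psi_*)=0$, hence $\psi_*\equiv 1$; in the shift case it yields $\psi_*(+\infty)=1$, after which the simple monotonicity inequality $(K_{a_n}\star\psi_n)(t)=(K\star\psi_n)(t+a_n)\ge (K\star\psi_n)(t+\alpha)$ (valid for $a_n\ge\alpha$) finishes the job. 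No Harnack inequality, no explicit supersolution, and no pointwise decay rate are needed.

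Your route---the maximum-principle comparison $1-\psi_n(t)\le e^{-\rho t}$ on $[0,\infty)$, together with the Harnack lower bound $\phi_n\ge\delta(L)$ on $[-L,L]$ in the $b_n\to\infty$ case---is valid and has the advantage of producing an explicit, quantitative decay of $1-\psi_n$. The price is heavier machinery (Harnack) and a longer argument; in particular, your parenthetical remark that continuity of $K$ is ``used here'' is not actually needed in either approach: dominated convergence for $K\star\psi_n$ only requires $K\in L^1$. In short, both proofs are sound; the paper's integral-identity/Fatou trick is shorter and more elementary, while yours gives sharper pointwise information.
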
 
Theorem \ref{dac3} follows from Lemmas \ref{dac} and \ref{dac1}  which are proved   in Section \ref{Sec5} (together with other results of independent interest). 

%%%%%%%%%%%%%%%%%%%%%%%%%%%%%%%%%%%%%%%%%%%%%%%%%%%%%%%%%

\section{Monotonicity: Proof of Theorem \ref{mude}}\label{Sec2}
For the reader's convenience, the proof is divided into several simple steps. 

\vspace{1mm}

\noindent (a) {\sf Proof of the positivity of $\phi(t).$}

\noindent  Note that if $\phi(s)=0$ at some point $s$ then necessarily $\phi'(s)=0$ as a consequence of the non-negativity 
of $\phi(t)$.  Considering the first equation in  (\ref{3ade})  as a linear homogeneous non-autonomous equation for $\phi$, we obtain immediately from the uniqueness property that $\phi \equiv 0$, a contradiction. Hence, $\phi(t) >0$ for all $t \in \R$. 

\vspace{1mm}
 
\noindent(b) {\sf  Establishing upper and lower bounds for $\psi:$ $ 0 \leq \psi(t) \leq 1.$ }

%Next, we  find the bounds for  $\psi$. 
\noindent Since $\psi(-\infty) =0$, $\liminf_{t \to +\infty}\psi(t) >0$, the inequality $\psi(t') >1$  for
some $t'\in \R$ implies that either 

\noindent (A) $\psi'(t)>0, \ t \geq t' $ or 

\noindent (B) there exists point $s$ where $\psi'(s) =0, \ \psi''(s)
\leq 0,$ $\psi(s) >1$. 

{\noindent Due to the  positivity of $\phi$ (part (a)), the alternative (B) 
contradicts to the second equation of (\ref{3ade}). If the option (A) holds then 
$$
\psi''(t) = c\psi'(t) +b \phi(t)(\psi(t)-1) \geq b(\psi(t')-1)\left(\inf_{s\in [t',+ \infty)}\phi(s)\right)
>0, \quad t \geq t', 
$$
and therefore $\psi(t)$ should be unbounded, a contradiction. It follows that $\psi(t) \leq1$ for all $t.$
Using a similar argument one can show that $\psi(t) \geq 0$ for all $t.$ (The case (A) is 
totally analogous, case (B) reads as $\psi'(s) =0, \ \psi''(s) \geq 0,$ $\psi(s)
<0$, and the contradiction is achieved in the same way as above).

\vspace{1mm}

\noindent (c) {\sf  Improving  an upper  bound for $\psi:$ $ \psi(t) <1.$ }

\noindent Since $\theta(t)= 1-\psi(t) \in [0,1]$
satisfies
$
\theta''(t) - c\theta'(t) -b\theta(t)\phi(t) =0
$
and the equality $\theta(s)=0$ at some point $s$ necessarily would imply $\theta'(s)=0$, and, consequently, $\theta \equiv 0$ by the uniqueness theorem,  
we conclude that this equality  cannot happen so that $\theta(t) >0$ for all $t \in \R$. In other words, $\psi(t)\in [0,1)$  for all $t$. 

\vspace{1mm}

\noindent (d) {\sf  Proof of the monotonicity and positivity of $\psi$: $ \psi(t) \in (0,1), \  \psi'(t) >0, \ t \in \R$. }

\noindent Suppose now that $\psi'(s) =0$ at some point $s$ (note here that  $\psi(s)=0$ implies  $\psi'(s) =0$). 
Using (a) and (c), from the second equation in (\ref{3ade}) we get $\psi''(s)<0,$ so that $s$ is a strict local maximum
point and thus  $\psi(s)>0$. The positivity of $\psi$ follows. 
Next, since $\liminf_{t \to +\infty}\psi(t) >0$, there exists some $s' >s$ where $\psi'(s') < 0, \
\psi''(s')= 0,$ $\psi(s') \in (0,1)$. This again contradicts to
the second equation of (\ref{3ade}).

\vspace{1mm}

\noindent (e) {\sf  Proof of the convergence of $\psi$: $ \psi(+\infty) =1$. }

\noindent Let $s_k\to -\infty, t_k \to +\infty$ be such that $ \psi'(s_k), \psi'(t_k) \to 0$ as $k \to +\infty$. 
Integrating  the second equation in (\ref{3ade}) on $[s_k, t_k]$ and then taking limit as $k \to +\infty$, 
we find that  
$$ c = b \int_\R\phi(t)(1-\psi(t))dt. $$
This shows that $ \psi(+\infty) =1$.  
\vspace{1mm}

\noindent (f) {\sf  Establishing  an upper  bound for $\phi:$ $ \phi(t) < 1.$ }

\noindent Suppose by contradiction that $\phi(t') \geq 1$ for some $t'\in {\R}$. Then either 

\noindent (C) $\phi'(t) >0$ for all $t\geq t'$ or 

\noindent (D) there exists a local maximum point $s$ where $\phi'(s) =0, \
\phi(s)
\geq 1,\ \phi''(s) \leq 0$. 

\noindent Since, by (d),  $(K\star\psi)(s)\in (0,1)$, the alternative (D) yields 
a contradiction with the first equation of (\ref{3ade}).  If the option (C) holds, then $(K\star\psi)(+\infty) =1$ (by (e)) and thus
$$
\liminf_{t \to +\infty}\phi''(t) = \liminf_{t \to +\infty}\left(c\phi'(t) + \phi(t)(\phi(t)-1 + r(1 - (K\star\psi)(t))\right) \geq  $$
$$\liminf_{t \to +\infty}\left(\phi(t)(\phi(t)-1 + r(1 - (K\star\psi)(t))\right)= \phi(+\infty)(\phi(+\infty)-1) >0, \quad t \to +\infty, 
$$
and therefore $\phi(t)$ should be unbounded. 

\vspace{1mm}
}

\noindent (g) {\sf  Proof of the monotonicity of $ \phi(t).$ }

\noindent Suppose now that $\phi'(s) =0, \phi(s) \in (0,1)$ at some point
$s$. First we consider the case when additionally $\phi''(s) =0,$
so that $1 - r- \phi(s)+ r(K\star\psi)(s)=0$. Differentiating the first
equation in  (\ref{3ade}), we  find that $\phi'''(s) =
- r(K\star\psi')(s)\phi(s) <0$.

This implies
that  $\phi(t) > \phi(s)$ for all $t < s$ close
to $s$. As a consequence, there exists $s' < s$ such that
$\phi'(s') =0, \phi''(s') \leq 0, $ $  1 > \phi(s') > \phi (s), \ (K\star\psi)(s')\leq 
(K\star\psi)(s).$ But then
$$
0=1 - r- \phi(s)+ r(K\star\psi)(s)>  1 - r- \phi(s')+ r(K\star\psi)(s'),
$$
which yields a contradiction: 
$0=\phi''(s') - c\phi'(s') + \phi(s') (1 - r- \phi(s')+
r(K\star\psi)(s'))<0.
$

Similarly, if $\phi''(s)<0,$ then  $s$ is a local maximum point and
$1 - r- \phi(s)+ r(K\star \psi)(s)>0$. Since $\liminf_{t \to +\infty}\phi(t) >0$,
there is  some $s'
>s$ where $\phi'(s') <0, \ \phi''(s') = 0,$ $0< \phi(s')< \phi(s),
\ (K\star\psi)(s)\leq (K\star\psi)(s')$ and therefore
\begin{eqnarray*}
0&<&1 - r- \phi(s)+ r(K\star\psi)(s)<  1 - r- \phi(s')+ r(K\star\psi)(s'),\\
0&=&\phi''(s') - c\phi'(s') + \phi(s') (1 - r- \phi(s')+
r(K\star\psi)(s'))>0,
\end{eqnarray*}
a contradiction.

Finally, let  $\phi'(s_1) =0$  and
$\phi''(s_1)
>0$ at some point $s_1$. Then 
there exists a local maximum point $s < s_1$ where $\phi'(s) =0$
and $\phi''(s) \leq 0$. However, this possibility was already
rejected. \hfill $\square$

\section{Proof of Theorem \ref{mumiV}}\label{Exi}
Given  $c \geq 2\sqrt{1-r}$, let $0<\lambda=\lambda_r(c) \leq \mu=\mu_r(c)$ denote the zeros of the characteristic polynomial  $\chi_r(z,c)=z^2-cz +1-r$. 

\vspace{2mm}

\noindent\underline{Step 1: $c > 2\sqrt{1-r}$}.   
For
$B = \min\{-(1+r+b), -4\lambda^2\}$, 
consider the  operators
$$
      {\mathcal F}_1(\phi, \psi)(t)=   \phi(t) (1 - r- B-\phi(t)+ r(K\star\psi)(t)), \quad
     {\mathcal F}_2(\phi, \psi)(t) = b \phi(t)(1-\psi(t)) - B\psi(t), 
$$
Note that  ${\mathcal F}_1, {\mathcal F}_2$  are
monotone in the sense that ${\mathcal F}_j(\phi_1, \psi_1)(t) \leq
{\mathcal F}_j(\phi_2, \psi_2)(t),$ $j=1,2, $ if $0\leq
\phi_1(t) \leq \phi_2(t)\leq 1, \  0\leq \psi_1(t) \leq
\psi_2(t)\leq 1, \ t \in {\R} $.
 Let $z_1 <0< z_2$ be the real roots
of the auxiliary equation $z^2-cz+B=0$.
Then  each bounded solution $(\phi, \psi)$ of the differential equations in  (\ref{3ade})  satisfies the system 
\begin{equation}\label{pfoin}
\phi(t)={\mathcal N}_1(\phi, \psi)(t), \  \psi(t)= {\mathcal
N}_2(\phi, \psi)(t), \quad {\rm where}
\end{equation}
$$
{\mathcal N}_j(\phi, \psi)(t):=\frac{1}{z_2 - z_1}
\left(\int^t_{-\infty} e^{z_1 (t-s)}{\mathcal F}_j(\phi, \psi)(s)ds + \int_t^{+\infty}e^{z_2
(t-s)}{\mathcal F}_j(\phi, \psi)(s)ds\right). 
$$
Conversely, each positive strictly monotone bounded solution $(\phi,
\psi)$ of  (\ref{pfoin}) yields a wavefront for 
(\ref{3ade}). It is clear that the operators ${\mathcal N}_j, j=1,2$  are also monotone.  As a consequence, each ${\mathcal N}_j(\phi, \psi)(t)$  increases in $t$ if both $\phi, \psi : {\bf R} \to [0,1]$ are increasing functions: 
$$
{\mathcal N}_j(\phi(\cdot), \psi(\cdot))(t-h) = {\mathcal N}_j(\phi(\cdot-h), \psi(\cdot-h))(t) \leq {\mathcal N}_j(\phi(\cdot), \psi(\cdot))(t), \quad h \geq 0.  
$$

%%%%%%%%%%%%%%%%%%%%%%%%%%%%%%%%%%%%%%%%%%%%%%%%%

\begin{lem} \label{Le2} Suppose that $r \in (0,1), \ c > 2\sqrt{1-r}$. Set $b_1:= b/(1-r)$. Then \mbox{$rb_1\kappa(r,c) \leq 1$} implies that 
$\Phi_+(t)= \min\{1, e^{\lambda t}\}$, $\Psi_+(t)= \min\{1, b_1e^{\lambda t}\}$, %$\lambda \in (D_1,D_2)$,  
 satisfy
the inequalities 
\begin{equation}\label{PiP}
\Phi_+^{(1)}(t) := {\mathcal N}_1(\Phi_+,
\Psi_+)(t) \leq  \Phi_+(t), \quad \Psi_+^{(1)}(t):= {\mathcal N}_2(\Phi_+,
\Psi_+)(t)\leq \Psi_+(t).
\end{equation}
\end{lem}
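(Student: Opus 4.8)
The plan is to read the two inequalities in \eqref{PiP} as upper-solution inequalities for the scalar problems that $\mathcal{N}_1$ and $\mathcal{N}_2$ invert, and to verify them by an elementary region-by-region computation relying only on $\chi_r(\lambda,c)=0$, on $(1-r)b_1=b$, and on the hypothesis $rb_1\kappa(r,c)\le1$.

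First I would record the analytic content of $\mathcal{N}_j$: writing $\mathcal{L}w:=w''-cw'+Bw$, the function $\mathcal{N}_j(\phi,\psi)$ is the unique bounded solution of $\mathcal{L}w=-\mathcal{F}_j(\phi,\psi)$, since the kernel $\frac{1}{z_2-z_1}e^{z_1(t-s)}$ ($t\ge s$), $\frac{1}{z_2-z_1}e^{z_2(t-s)}$ ($t\le s$) is the Green function of $\mathcal{L}$, it is positive because $z_1<0<z_2$, and $z_1<0<z_2$ also excludes nonzero bounded solutions of $\mathcal{L}w=0$. Since the resulting solution operator is positivity preserving, \eqref{PiP} will follow once I show that $\Phi_+$ and $\Psi_+$ are \emph{weak supersolutions}:
\[
\mathcal{L}\Phi_+(t)\le-\mathcal{F}_1(\Phi_+,\Psi_+)(t),\qquad \mathcal{L}\Psi_+(t)\le-\mathcal{F}_2(\Phi_+,\Psi_+)(t)
\]
at every point of differentiability, while at the single corner of $\Phi_+$ (at $t=0$) and of $\Psi_+$ (at $t_\psi:=-\lambda^{-1}\ln b_1$) the left-hand derivative exceeds the right-hand one, so that $\Phi_+''$, $\Psi_+''$ each acquire a nonpositive Dirac mass there; under the positive solution operator these singular parts only reinforce $\mathcal{N}_1(\Phi_+,\Psi_+)\le\Phi_+$ and $\mathcal{N}_2(\Phi_+,\Psi_+)\le\Psi_+$. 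This ``upper solution with a corner'' step is standard, and stating it so that the Dirac masses are genuinely harmless is the one place I would be careful.

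With this reduction the remaining work is short. Where $\Phi_+(t)=e^{\lambda t}$, $\chi_r(\lambda,c)=0$ yields $\mathcal{L}\Phi_+(t)=(\lambda^2-c\lambda+B)e^{\lambda t}=-(1-r-B)e^{\lambda t}$, while, using $\Psi_+(u)\le b_1e^{\lambda u}$ and the definition of $\kappa(r,c)$,
\[
\mathcal{F}_1(\Phi_+,\Psi_+)(t)\le(1-r-B)e^{\lambda t}+e^{2\lambda t}\bigl(rb_1\kappa(r,c)-1\bigr)\le(1-r-B)e^{\lambda t}=-\mathcal{L}\Phi_+(t);
\]
this is the only point at which $rb_1\kappa(r,c)\le1$ is invoked. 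Where $\Phi_+(t)=1$ one has $\mathcal{L}\Phi_+(t)=B$ and $\mathcal{F}_1(1,\Psi_+)(t)=-B-r\bigl(1-(K\star\Psi_+)(t)\bigr)\le-B$, because $0\le(K\star\Psi_+)(t)\le1$. The function $\Psi_+$ is handled the same way: where $\Psi_+(t)=b_1e^{\lambda t}$, again $\mathcal{L}\Psi_+(t)=-(1-r-B)b_1e^{\lambda t}$, while $0\le\Phi_+(t)\le e^{\lambda t}$ and $0\le1-\Psi_+(t)\le1$ give
\[
\mathcal{F}_2(\Phi_+,\Psi_+)(t)=b\Phi_+(t)\bigl(1-\Psi_+(t)\bigr)-B\Psi_+(t)\le be^{\lambda t}-Bb_1e^{\lambda t}=(1-r-B)b_1e^{\lambda t}=-\mathcal{L}\Psi_+(t),
\]
the last equality being exactly $(1-r)b_1=b$; and where $\Psi_+(t)=1$ one has $\mathcal{L}\Psi_+(t)=B=-\mathcal{F}_2(\Phi_+,1)(t)$.

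The main obstacle is therefore only organisational: the order of the two corners depends on whether $b_1\gtrless1$, so the regions above must be listed carefully (for $b_1>1$ there is an intermediate range on which $\Phi_+=e^{\lambda t}$ but $\Psi_+\equiv1$, and for $b_1<1$ one on which $\Phi_+\equiv1$ but $\Psi_+=b_1e^{\lambda t}$), and the corner argument has to be spelled out. No comparison or uniqueness result beyond positivity of the Green kernel and boundedness of $\mathcal{F}_j(\Phi_+,\Psi_+)$ is needed; alternatively one may check the two integral inequalities in \eqref{PiP} directly, trading the corner discussion for somewhat longer but equally elementary estimates.
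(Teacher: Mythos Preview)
Your proposal is correct, and the underlying idea---verify that $(\Phi_+,\Psi_+)$ is an upper solution for the operator $\mathcal{L}=\partial_t^2-c\partial_t+B$ and then use positivity of the Green kernel---is the same as the paper's. The implementations differ in how the piecewise structure of $\Phi_+,\Psi_+$ is handled. You work directly with the truncated functions, checking $\mathcal{L}\Phi_+ +\mathcal{F}_1(\Phi_+,\Psi_+)\le 0$ and $\mathcal{L}\Psi_+ +\mathcal{F}_2(\Phi_+,\Psi_+)\le 0$ region by region and absorbing the corners as nonpositive Dirac contributions to $\Phi_+''$, $\Psi_+''$. The paper instead avoids corners entirely: it first shows that the \emph{smooth, unbounded} pair $(\phi_+,\psi_+)=(1,b_1)e^{\lambda t}$ satisfies $\mathcal{D}_1(\phi_+,\psi_+)=-e^{2\lambda t}(1-rb_1\kappa(r,c))\le 0$ and $\mathcal{D}_2(\phi_+,\psi_+)=-bb_1e^{2\lambda t}<0$, so $\mathcal{N}_j(\phi_+,\psi_+)\le(\phi_+,\psi_+)$, and separately that $\mathcal{N}_j(1,1)\le 1$; then monotonicity of $\mathcal{N}_j$ in both arguments gives $\mathcal{N}_j(\Phi_+,\Psi_+)\le\min\{\mathcal{N}_j(\phi_+,\psi_+),\mathcal{N}_j(1,1)\}\le\min\{(\phi_+,\psi_+),1\}$, which is exactly $(\Phi_+,\Psi_+)$. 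The paper's route trades your corner bookkeeping for the need to check that the improper integrals $\mathcal{N}_j(\phi_+,\psi_+)$ converge, which is precisely why the choice $B\le -4\lambda^2$ (forcing $2\lambda<z_2$) is made. Your route sidesteps that convergence issue since $\mathcal{F}_j(\Phi_+,\Psi_+)$ is bounded, at the modest cost of spelling out the corner argument and the case split on $b_1\gtrless 1$ that you already flag.
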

\begin{proof} Set $(\phi_+(t), \psi_+(t))=(1,b_1)e^{\lambda t}$. Then the assumptions of the lemma imply that 
\begin{equation}
\label{3fs} \begin{array}{ll}
{\cal D}_1(\phi_+,\psi_+)(t):= \phi_+''(t) - c\phi_+'(t) + \phi_+(t) (1 - r- \phi_+(t)+ r(K\star\psi_+)(t)) \leq 0,
      \\
{\cal D}_2(\phi_+,\psi_+)(t):=     \psi_+''(t) - c\psi_+'(t) +b \phi_+(t)(1-\psi_+(t)) < 0.
\end{array}%
\end{equation}
Indeed, ${\cal D}_1(\phi_+,\psi_+)(t) = -e^{2\lambda t}(1-rb_1\kappa(r,c))\leq 0$, \ ${\cal D}_2(\phi_+,\psi_+)(t) = -bb_1e^{2\lambda t}<0$. That is, $(\phi_+(t), \psi_+(t))$ is a regular upper solution to the system (\ref{3ade}) and therefore 
$$  {\mathcal N}_1(\phi_+,
\psi_+)(t)  \leq  \phi_+(t), \quad {\mathcal N}_2(\phi_+,
\psi_+)(t) \leq \psi_+(t).$$
Note that the improper integrals  ${\mathcal N}_j(\phi_+,
\psi_+)(t)$ are convergent for each $t\in \R$, $j=1,2$, because $2\lambda < z_2$ due to our choice of $B$.  
On the other hand, % since $(1,1)$ is an equilibrium for  (\ref{3ade}), 
we have that 
$$ {\mathcal N}_1(1,1)(t)  \leq  1, \quad {\mathcal N}_2(1,1)(t) \leq 1.$$
Thus, using the monotonicity properties of ${\mathcal N}_j$, we find that, for every $t \in \R$, 
$$  {\mathcal N}_1(\Phi_+,
\Psi_+)(t) \leq {\mathcal N}_1(\phi_+,
\psi_+)(t) \leq \phi_+(t), \quad {\mathcal N}_1(\Phi_+,
\Psi_+)(t) \leq {\mathcal N}_1(1,
1)(t) \leq 1,$$
which implies the first inequality in (\ref{PiP}). The proof of the second inequality in (\ref{PiP}) is similar. 
\qed
\end{proof}
Set now
$ \Psi_-(t) \equiv 0$ and let  $\Phi_-(t), \ t \in {\R},$ be a unique positive (up to a shift) wavefront solution to the usual KPP-Fisher equation
$$
\phi''(t) - c\phi'(t) + \phi(t) (1 - r- \phi(t)) = 0,\
\phi(-\infty)=0, \ \phi(+\infty) = 1-r >0.
$$
It is well known  that $\Phi_-(t)$ is strictly
increasing and that, without loss of generality, we can assume that 
 $\Phi_-(t) \leq e^{\lambda t},$ $ t \in
\R.$  Consequently, 
\begin{equation}\label{pYp}
\Phi_-(t)\leq 
\Phi_+(t), \quad \Psi_-(t)\leq 
\Psi_+(t), \quad t \in \R,
\end{equation}
and 
\begin{equation}
\label{3fs} \begin{array}{ll}
\Phi_-''(t) - c\Phi_-'(t) + \Phi_-(t) (1 - r- \Phi_-(t)+ r(K\star\Psi_-)(t)) =0,
      \\
 \Psi_-''(t) - c\Psi_-'(t) +b \Phi_-(t)(1-\Psi_-(t))> 0.
\end{array}%
\end{equation}
That is, $(\Phi_-(t), \Psi_-(t))$ is a regular lower solution to the system (\ref{3ade}) and therefore 
$$
\Phi_-(t) \leq  \Phi_-^{(1)}(t):= {\mathcal N}_1(\Phi_-,
\Psi_-)(t); \quad  
\Psi_-(t)  < \Psi_-^{(1)}(t):= {\mathcal N}_2(\Phi_-,
\Psi_-)(t). $$
Hence, applying the monotone integral operators ${\mathcal N}_j$ to (\ref{pYp}), we get
\begin{eqnarray*}
\Phi_-(t) &\leq & \Phi_-^{(1)}(t):= {\mathcal N}_1(\Phi_-,
\Psi_-)(t) \leq  {\mathcal N}_1(\Phi_+,
\Psi_+)(t)=: \Phi_+^{(1)}(t)  \leq  \Phi_+(t), 
 \\
\Psi_-(t)  &< &\Psi_-^{(1)}(t):= {\mathcal N}_2(\Phi_-,
\Psi_-)(t)  \leq {\mathcal N}_2(\Phi_+,
\Psi_+)(t)=: \Psi_+^{(1)}(t) \leq \Psi_+(t).
\end{eqnarray*}
Iterating this procedure, we obtain four sequences of positive  monotone continuous functions 
\begin{equation} \label{apli} \hspace{7mm}
 \Psi_-^{(n+1)}(t) =  {\mathcal N}_2(\Phi_-^{(n)}, \Psi_-^{(n)})(t), \ \Phi_-^{(n+1)}(t) =  {\mathcal N}_1(\Phi_-^{(n)}, \Psi_-^{(n)})(t), \  n =1,2, \dots,
\end{equation}

\vspace{-7mm} 

$$
\hspace{-7mm} {\rm and} \ \   \Psi_+^{(n+1)}(t) =  {\mathcal N}_2(\Phi_+^{(n)}, \Psi_+^{(n)})(t), \ \Phi_+^{(n+1)}(t) =  {\mathcal N}_1(\Phi_+^{(n)}, \Psi_+^{(n)})(t), \  n =1,2, \dots,
$$
The sequences  $\Psi_-^{(n)}$, $\Phi_-^{(n)}$  are strictly increasing and   $\Psi_+^{(n)}$, $\Phi_+^{(n)}$   are strictly decreasing.  Set $\Phi = \lim \Phi_-^{(n)}, \
\Psi = \lim \Psi_-^{(n)}$, then 
\begin{equation} \label{zer}
\Phi_- \leq  \Phi \leq \Phi_+, \ \Psi_-\leq \Psi \leq  \Psi_+.
\end{equation}
Finally, a straightforward application of the Lebesgue's dominated convergence theorem %to (\ref{apli})
shows that the pair $(\Phi, \Psi)$  satisfies system (\ref{pfoin}). Clearly, $\Phi(t), \Psi(t), \ t \in \R,$ are positive bounded monotone functions meeting the boundary conditions $\Phi(-\infty) = \Psi(-\infty)=0$ because of (\ref{zer}).  Thus  the values of $\Phi(+\infty), \Psi(+\infty)$ are finite and positive.  A standard argument based on the Barbalat lemma (cf. \cite{wz}) shows that $(\Phi(+\infty),\Psi(+\infty))$ is a positive equilibrium to  the system of  differential equations (\ref{3ade}).  This completes the proof of Theorem \ref{mumiV}, when $c > 2\sqrt{1-r}$. 

\vspace{2mm}

\noindent\underline{Step 2: $c =c_*:=2\sqrt{1-r}$}.  Consider a strictly decreasing sequence $c_j \to c_*$ and strictly increasing sequence $b_j \to b$ such that 
$r+b_jr\kappa(r,c_j) < 1$ for all large $j$. The existence of such sequences follows easily from the continuous dependence of  $\kappa(r,c)$ on the variable $c$. Therefore Step 1 guarantees that the system (\ref{1}) considered with $c=c_j, b = b_j$ has at least one positive monotone wavefront
$(\phi_j(x+c_jt), \psi_j(x+c_jt))$ normalized by the condition $\phi_j(0)=0.5$. Furthermore, the functional sequences $(\phi'_j(t), \psi'_j(t))$ are uniformly bounded on $\R$. Indeed, we have that $\liminf_{t \to \pm \infty}\phi'_j(t)= \liminf_{t \to \pm \infty}\psi'_j(t)=0$.  Therefore, if  $\alpha=\sup_{t \in \R}\phi'_j(t)$,  then there exists a sequence (possibly finite) $s_k$ such that 
$
\phi'_j(s_k)
$ converges (or is equal) to $\alpha$ and 
$
\phi''_j(s_k)=0. 
$
Since, in virtue of (\ref{3ade}), 
\begin{equation}\label{psk}
\phi'_j(s_k) =c_j^{-1}\phi_j(s_k) (1 - r- \phi_j(s_k)+ r(K\star\psi_j)(s_k))  \leq (4c_j)^{-1}\leq (4c_*)^{-1},
\end{equation}
we obtain that $\sup_{t \in \R}\phi'_j(t)\leq 1/(4c_*)$ for every $j$. Similarly, $\sup_{t \in \R}\psi'_j(t)\leq b/c_*$,  $j\in \N$. 
Using these estimates of derivatives, from the system  (\ref{3ade}) we obtain the uniform boundedness of $(\phi''_j(t), \psi''_j(t))$. But then, after   differentiating (\ref{3ade}) with respect to $t$, we get the uniform boundedness of $(\phi'''_j(t), \psi'''_j(t))$.  All this allows us  to establish, with the help of the Ascoli-Arzel\`a theorem, the existence of a subsequence  of functions $(\phi_{j_k}(t), \psi_{j_k}(t))$, uniformly converging in $C^2$-norm on compact subsets of $\R$ to a pair of $C^2$-smooth non-decreasing functions $(\phi_{*}(t), \psi_{*}(t)),$ 
such that $\phi_{*}(0)=0.5$. Such a convergence assures that the limit functions satisfy  differential equations in  (\ref{3ade}), while their limit values at $\pm\infty$ belong to the set of equilibria for (\ref{3ade}).  The latter fact and the relation  $\phi_{*}(0)=0.5$ imply that $\phi_{*}(-\infty)=0, \ \psi_{*}(+\infty) = \phi_{*}(+\infty)=1$. To prove that $\psi_{*}(-\infty)=0$, it suffices to apply Lemma \ref{mda}. By this lemma, 
 $\psi_j(t) < \sqrt{M \phi_j(t)}, \ t \in \mathbb{R}$ so that  $\psi_*(t) \leq \sqrt{M \phi_*(t)}, \ t \in \mathbb{R},$
and therefore $\psi_*(-\infty)=0$.  \qed

%%%%%%%%%%%%%%%%%%%%%%%%%%%%%%%%%%%%%%%%%%%%%%%%%%%%%%%%%%%%%%%%%%%%%

\section{Proof of Theorem \ref{mumiWWW}}\label{Sec4}
Model (\ref{1}) in the limiting case $r=0$ semi-splits, i.e. the first equation is independent on $v.$  It actually coincides with the 
KPP-Fisher equation,  which has a unique monotone wavefront $u=\phi_c(x+ct)$ for each velocity $c \geq 2$.  This suffices to deduce the existence of the accompanying monotone front $v=\psi_c(x+ct)$ from the second equation of (\ref{1}). 

In this section, we show that   system (\ref{1}) with  $r \in (0,1)$ has the same properties. Actually,  equations (\ref{1}) with $r=0$ can be regarded as a starting comparison system for the case when 
$r >0$. Particularly, we  fix an arbitrary  $c >2$ and use the first component of the basic upper solution 
$$
\phi_+(t)= \left\{
\begin{array}{ll}
    e^{\lambda_0t},
    &    t \leq 0,\\
     1, & t \geq 0, \\   \end{array}%
\right.
$$

\noindent where    $\lambda_0:=\lambda_0(c), \ c > 2,$  while the second component $\psi_+(t)$ is defined as the unique strictly monotone  solution of the boundary value problem 
\begin{equation}\label{3adek}
     \psi''(t) - c\psi'(t) +b_1 \phi_+(t)(1-\psi(t)) =0,  \quad    \psi(-\infty)=0, \  \psi(+\infty)=1, 
\end{equation}
with  $b_1>b$.  The existence of such a solution is established in Lemma \ref{OWL} of Appendix. {We have that  $
\psi_+(t)= b_1 e^{\lambda_0t}(1+  o(e^{\gamma t})), \ t\to-\infty$, with an appropriate $\gamma >0$,  see e.g. \cite[Proposition 6.1]{MP}.
As a consequence, for some $B_1 \geq b_1$, 
\begin{equation}\label{B}
0 < \psi_+(t) \leq   B_1 e^{\lambda_0t}, \quad t \in \R. 
\end{equation}}
Since $b_1>b$, we also find that  
\begin{equation}\label{3adekv}
    \psi_+''(t) - c\psi_+'(t) +b \phi_+(t)(1-\psi_+(t)) <0,  \quad t \in  \R.  
\end{equation}
In addition, we also have that 
\begin{equation}\label{19}
 \phi_+''(t) - c\phi_+'(t) + \phi_+(t) (1 - r- \phi_+(t)+ r(K\star \psi_+)(t)) =
\end{equation}
$$
= \left\{
\begin{array}{ll}
    -e^{2\lambda_0t} -re^{-\lambda_0t}(1-(K\star \psi_+)(t)) <0,
    &    t \leq 0,\\
     -r(1-(K\star \psi_+)(t)) <0, & t \geq 0. \\   \end{array}%
\right.
$$
Hence, the functions $\phi_+, \psi_+$ satisfy the basic differential inequalities  for super-solutions and 
they also have the desired behaviour on $\R_+$. However, since $\lambda_r=\lambda_r(c) < \lambda_0(c)=\lambda_0$ for $r>0$, they decay too rapidly at $-\infty$. We will overcome
this  drawback by adding small correction terms to $\phi_+(t), \psi_+(t)$ for $t \leq -1$.   

\vspace{2mm} 

At the first stage, let us consider the situation when $K(s,y)$ has compact support, say supp\,$K \subset [0,m]\times [-m,m]$. Then 
$$
(K\star e^{j\lambda_r \cdot})(t)= e^{j\lambda_rt}\kappa_j(r,c), \quad \kappa_j(r,c):= \int_0^{m}\int_{-m}^m K(s,y)e^{-j\lambda_r(cs+y)}dy\;ds. 
$$

Let $\epsilon >0$ be a small number and  let a non-increasing function $\eta \in C^\infty(\R)$ be such that $\eta(t)=1,$ $ t \leq -2$, $\eta(t)=0, t \geq -1$, $\eta'(t) < 0,$ $t \in (-2,-1)$.  With $k \in \N$ being the maximal integer such that $k\lambda_r \leq  \lambda_0$ (so that, assuming that $k>1$, we have $\chi_r(j\lambda_r,c)<0$ for $j=2,\dots,k$), consider 
the 
functions 
$$
\Phi_+(t)= \phi_+(t) + \phi_\epsilon(t),\quad  \phi_\epsilon(t):= \eta(t)\sum_{j=1}^ka_j\epsilon^je^{j\lambda_rt},$$
$$  \Psi_+(t)= \psi_+(t) +\psi_\epsilon(t), \quad \psi_\epsilon(t):= \eta(t)\sum_{j=1}^kb_j\epsilon^je^{j\lambda_rt}, 
$$
where $a_1=1$, $b_1=b/(1-r)$,  and for  $j >1$ %then  other coefficients $a_j, b_j$ are defined as
\begin{equation}\label{aj}
a_j =a_j(r,c):=\frac{\sum_{p+q=j}a_p(a_q-r\kappa_q(r,c)b_q)}{\chi_r(j\lambda_r,c)},
\end{equation}
\begin{equation}\label{bj}
b_j =b_j(r,c):=\frac{-ba_j}{(j\lambda_r)^2-c(j\lambda_r)}= \frac{ba_j}{ 1-r-\chi_r(j\lambda_r,c)}. 
\end{equation}
Clearly, $\Phi_+(t) =\phi_+(t),\ \Psi_+(t)= \psi_+(t)$ for $t \geq -1$, so that 
$${\cal D}_1(\Phi_+,\Psi_+)(t)= {\cal D}_1(\phi_+,\psi_+)(t) <0$$
 for all $t \geq m(1+c)$ (and ${\cal D}_1$ defined by \eqref{3fs}).

 Furthermore, since $a_1, b_1$ are positive, we have that $\phi_\epsilon(t), \psi_\epsilon(t) \geq 0, \ t \in \R$, and 
$$
\phi_\epsilon(t) >0,\ \phi_\epsilon'(t)>0,\ \psi_\epsilon(t) >0,\ \psi_\epsilon'(t) >0, \quad t \leq -2; \qquad \Psi_+(t) <1, \ t \in \R, 
$$
for all  sufficiently small $\epsilon >0$. 
Hence, for all $t \leq -3-m(1+c)$, 
$$
{\cal D}_1(\Phi_+,\Psi_+)(t) =  \Phi_+''(t) - c\Phi_+'(t) + \Phi_+(t) (1 - r- \Phi_+(t)+ r(K\star \Psi_+)(t)) =
$$
$$
e^{\lambda_0t} (- e^{\lambda_0t}-r(1-(K\star \psi_+)(t))) + \sum_{j=1}^ka_j\chi_r(j\lambda_r,c)\epsilon^je^{j\lambda_rt}
-{\left(\sum_{j=1}^ka_j\epsilon^je^{j\lambda_rt}\right)^{2}}+
$$
{
$$
e^{\lambda_0t}\sum_{j=1}^k(rb_j\kappa_j(r,c) -2a_j) \epsilon^je^{j\lambda_rt} +r(K\star \psi_+)(t)\sum_{j=1}^ka_j\epsilon^je^{j\lambda_rt}+ $$
$$+r\sum_{j=1}^ka_j\epsilon^je^{j\lambda_rt}\sum_{j=1}^kb_j\kappa_j(r,c)\epsilon^je^{j\lambda_rt}=:e^{\lambda_0t}(\Delta_1(t,\epsilon)+\Delta_2(t,\epsilon)) =:e^{\lambda_0t}\Delta(t,\epsilon),  
$$
where, in view of (\ref{aj}), (\ref{B}),  respectively, 
$$
\Delta_1(t,\epsilon):= - e^{\lambda_0t}-r(1-(K\star \psi_+)(t))+ \epsilon e^{\lambda_rt}S_{k-1}(\epsilon e^{\lambda_rt}) +\epsilon^{k+1}e^{((k+1)\lambda_r-\lambda_0)t}T_{k-1}(\epsilon e^{\lambda_rt}),
$$
$$
|\Delta_2(t,\epsilon)|:=\left|e^{-\lambda_0t}r(K\star \psi_+)(t)\sum_{j=1}^ka_j\epsilon^je^{j\lambda_rt}\right|\leq B_1r\kappa_1(0,c)\left|\sum_{j=1}^ka_j\epsilon^je^{j\lambda_rt}\right|, \ t \in \R, 
$$
and  $S_{k-1}, T_{k-1}$ are real polynomials of degree $k-1$.  

Note that $\Delta(t,0)<0$, $\Delta(-\infty,\epsilon)=-r$, 
and $\lim_{\epsilon \to 0^+}\Delta(t,\epsilon) = \Delta(t,0)$ uniformly on $(-\infty, -3-m(1+c)]$. }

Therefore ${\cal D}_1(\Phi_+,\Psi_+)(t) <0,$ $t \in (-\infty, -3-m(1+c)]$,  for all small $\epsilon >0$.  In addition, since $\phi_\epsilon(t),  \psi_\epsilon(t)$ are $C^\infty$-smooth in $\epsilon, t$, we have that 
$$\lim_{\epsilon\to 0^+}{\cal D}_1(\Phi_+,\Psi_+)(t) = {\cal D}_1(\phi_+,\psi_+)(t),$$
 uniformly  on $t \in [-3-m(c+1),m(1+c)]\setminus\{0\}$. In this way,  there exists $\epsilon_0>0$ such that 
$$\Phi_+'(0^+)-  \Phi_+'(0^-)=\phi_+'(0^+)-  \phi_+'(0^-)>0, \quad {\cal D}_1(\Phi_+,\Psi_+)(t)  <0, \quad t \in \R\setminus\{0\}, \quad \epsilon \in (0,\epsilon_0].$$

Similarly, using (\ref{bj}), we find that 
\begin{equation}\label{22}
{\cal D}_2(\Phi_+,\Psi_+)(t) = \Psi_+''(t) - c\Psi_+'(t) + b\Phi_+(t) (1 -  \Psi_+(t)) =
\end{equation}
$$
    \psi_+''(t) - c\psi_+'(t) +b \phi_+(t)(1-\psi_+(t)) + \sum_{j=1}^k(b_j((j\lambda_r)^2-c(j\lambda_r))+ba_j) \epsilon^je^{j\lambda_rt} $$
    $$-b\Phi_+(t)\psi_\epsilon(t)- b\psi_+(t)\phi_\epsilon(t)\leq     \psi_+''(t) - c\psi_+'(t) +b \phi_+(t)(1-\psi_+(t)) <0, \quad t \in \R. 
$$
In this way,  $(\Phi_+(t), \Psi_+(t))$ is a super-solution for system (\ref{3ade}) and therefore 
$$  {\mathcal N}_1(\Phi_+,
\Psi_+)(t)  \leq  \Phi_+(t), \quad {\mathcal N}_2(\Phi_+,
\Psi_+)(t) \leq \Psi_+(t),$$
e.g. see \cite[Lemma 18]{TPTa} for details. 

 Next, we  take  $\Psi_-(t) \equiv 0$ and $\Phi_-(t)$ defined in Section \ref{Exi} as suitable lower solutions. Since $\Phi_-(t)$, $\Phi_+(t)$ 
are monotone and $\Phi_-(t)/\Phi_+(t)$ converges to a finite positive number at $-\infty$, without loss of generality, we can assume that relations (\ref{pYp}) are satisfied after an appropriate translation of $(\Phi_+(t), \Psi_+(t))$.  As we know from Section \ref{Exi}, this implies the existence of monotone wavefront $(\Phi(t), \Psi(t))$ propagating with the given speed  $c> 2$. 

\vspace{2mm} 

At the second stage, we consider $K(s,y)$  having non-compact support and $c\geq 2$. Let $K_m(s,y)$ be a sequence of 
normalised kernels such that supp\,$K_m \subset [0,m]\times [-m,m]$ and $K_m\to K$ uniformly on compact sets.  Since we include the limit case $c=2$, we also consider a strictly decreasing sequence $\{c_m\}$ converging to $c$. 
  Then the above argumentation guarantees that system (\ref{3ade}) considered with the speed $c_m>2$  and the interaction kernel $K_m$ has at least one positive monotone wavefront
$(\phi_m(x+c_mt), \psi_m(x+c_mt))$ normalised by the condition $\phi_m(0)=0.5$.

Furthermore, the functional sequences $\phi'_m(t), \psi'_m(t)$ are uniformly bounded on $\R$ by $(1+b)/2$, cf. (\ref{psk}). 
Using these estimates of derivatives, we obtain from  system  (\ref{3ade})  the uniform boundedness of $(\phi''_m(t), \psi''_m(t))$. But then, after   differentiating (\ref{3ade}) with respect to $t$, we get the uniform boundedness of $(\phi'''_m(t), \psi'''_m(t))$.  All this allows us  to establish, with the help of the Ascoli-Arzel\`a theorem, the existence of a subsequence  of functions $(\phi_{m_k}(t), \psi_{m_k}(t))$, uniformly converging in $C^2$-norm on compact subsets of $\R$ to some $C^2$-smooth non-decreasing functions $(\phi_{*}(t), \psi_{*}(t)),$ 
such that $\phi_{*}(0)=0.5$.  Arguing now as in Step 2 of the proof of Lemma \ref{Le2}, we find that $(\phi_{*}(t), \psi_{*}(t))$ is actually a  positive monotone wavefront for (\ref{1}) propagating at the speed $c\geq 2$. 

\vspace{2mm} 
{
Hence, the above two stages of our analysis have led  to the following partial conclusion:  

{ \it For each triple of parameters $r \in (0,1), b>0$, $c \geq 2$  there exists 
at least one positive monotone wavefront for (\ref{1}) propagating with the speed $c$}. 

This assertion says that 
$c_*(r,b,K) \leq 2$. To complete the proof of Theorem \ref{mumiWWW}, we still should establish that the set of all admissible speeds  for (\ref{3ade}) is a connected closed interval.   This work is done in the remainder of this section. }

%%%%%%%%%%%%%%%%%%%%%%%%%%%%%%%%%%%%%%%%%%%%%%%%%%%%%%%%%%%%%%%%%%%%%%
\vspace{2mm} 

So, let $(\phi(t),\psi(t))$ be a monotone wave 
to  system (\ref{3ade}) propagating with the speed $c$. 

\vspace{2mm}

\noindent (a) We claim that for every $\tilde c>c$ there exists $\sigma >1$ such that, for all $t \in {\R}$,
the pair $(\phi_\sigma,\psi_\sigma) = \sigma(\phi,\psi)$ satisfies the inequalities 
\begin{equation}\label{3s}\left\{
\begin{array}{ll}
{\cal D}_1(\phi_\sigma,\psi_\sigma)(t):=  \phi_\sigma''(t) - \tilde c\phi_\sigma'(t) + \phi_\sigma(t) (1 - r- \phi_\sigma(t)+ r(K\star_{\tilde c}\psi_\sigma)(t)) <0,
    &    \\
   {\cal D}_2(\phi_\sigma,\psi_\sigma)(t):=  \psi_\sigma''(t) - \tilde  c\psi_\sigma'(t) +b \phi_\sigma(t)(1-\psi_\sigma(t)) < 0, 
&
\end{array}%
\right.
\end{equation}
where we use the notation$$
(K\star_\alpha \psi)(t):= \int_0^{+\infty}\int_\R K(s,y)\psi(t-\alpha s-y)dy\; ds. 
$$
Alternatively, 
\begin{equation}\label{3w}\left\{
\begin{array}{ll}
(\phi'(t))^{-1}\phi(t) (- \phi(t) -\frac{\gamma(t)r}{\sigma-1}+ r(K\star_{\tilde  c} \psi)(t)) < \frac{\tilde c-c}{\sigma-1},
    &    \vspace{2mm} \\ 
    -b(\sigma-1)\phi(t)\psi(t) < (\tilde  c-c)\psi'(t), &
\end{array}%
\right.
\end{equation}
where, from the monotonicity  of $\psi$,  $\gamma(t)= (K\star_c\psi)(t)- (K\star_{\tilde c}\psi)(t)= $ $$=\int_0^{+\infty}\int_\R K(s,y)\left(\psi(t-cs-y)-\psi(t- \tilde cs-y)\right)dy\;ds \geq 0, \quad t \in \R, \quad \gamma(\pm\infty)=0.  $$
 Since $\phi(t), \psi(t), \phi'(t), \psi'(t) >0$ for all $t \in {\R}$, the second inequality in \eqref{3w} is always satisfied and 
 it suffices to prove  that,  for some appropriate $\sigma >1$, 
$$\Psi(t):=(\phi'(t))^{-1}\phi(t) (- \phi(t) + r(K\star_{\tilde  c} \psi)(t)) < \frac{\tilde  c-c}{\sigma-1}, \quad t \in \R.$$

From Proposition \ref{mudas}, we know that $(\phi'(t))^{-1}\phi(t)$ has a finite limit at $t = -\infty$ when $r \in (0,1)$.   Thus
$\Psi(-\infty)=0, \  \Psi(+\infty) = - \infty$  so that  if $\sigma
>1$ is close to $1$, then $\Psi(t) <
(\sigma-1)^{-1}(\tilde  c-c)$ for all $t \in {\R}$.

Therefore  we  found appropriate $\sigma >1,$ for which the inequalities \eqref{3s} are satisfied.

\vspace{2mm}
 
\noindent (b) We proceed by establishing the existence of wavefronts propagating with the speed $\tilde  c$. Here our proof follows closely the arguments developed in Section \ref{Sec4}, beginning from relation (\ref{19}). Again, we will first assume that $K$ has 
a compact support contained in the rectangle $[0,m]\times [-m,m]$. Since the asymptotic behaviour at $-\infty$ of both pairs $(\phi_\sigma,\psi_\sigma)$ and 
$(\phi,\psi)$ is determined by one of the eigenvalues $\mu_r(c) \geq \lambda_r(c)$ which are strictly bigger than $\lambda_r':= \lambda_r(\tilde  c)$, we again will use the correcting terms $\phi_\epsilon(t), \psi_\epsilon(t)$ from Section \ref{Sec4} 
and define the upper solutions by {
$$
\Phi_+(t)= \phi_\sigma(t) + \phi_\epsilon(t),\quad  \phi_\epsilon(t):= \eta(t)\sum_{j=1}^ka_j(r,\tilde c)\epsilon^je^{j\lambda_r't},$$
$$  \Psi_+(t)= \psi_\sigma(t) +\psi_\epsilon(t), \quad \psi_\epsilon(t):= \eta(t)\sum_{j=1}^kb_j(r,\tilde c)\epsilon^je^{j\lambda_r't}, 
$$
where $k$ is the maximal integer such that $k\lambda_r'=k\lambda_r(\tilde c) \leq  \mu_r(c)$. Note that, whenever $k>1$, we have $\chi_r(j\lambda_r',\tilde  c)<0$ for $j=2,\dots,k$ and therefore the numbers $a_j' =a_j(r,\tilde c),$ $b_j' =b_j(r,\tilde c)$ are also well defined for $j >1$.} Then,  for all $t \leq -3-m(1+ c)$ and sufficiently small $\epsilon$,  by using (\ref{3s}), we get 
$$
{\cal D}_1(\Phi_+,\Psi_+)(t) =  \Phi_+''(t) - \tilde  c\Phi_+'(t) + \Phi_+(t) (1 - r- \Phi_+(t)+ r(K\star_{\tilde  c} \Psi_+)(t)) =
$$
$$
{\cal D}_1(\phi_\sigma,\psi_\sigma)(t)+ \sum_{j=1}^ka_j'\chi_r(j\lambda_r',\tilde  c)\epsilon^je^{j\lambda_r't}
-{\left(\sum_{j=1}^ka_j'\epsilon^je^{j\lambda_r't}\right)^{2}}+r(K\star_{\tilde  c} \psi_\sigma)(t)\sum_{j=1}^ka_j'\epsilon^je^{j\lambda_r't}
$$
$$
+\phi_\sigma(t)\sum_{j=1}^k(rb'_j\kappa_j(r, \tilde c) -2a_j') \epsilon^je^{j\lambda_r't} + r\sum_{j=1}^ka_j'\epsilon^je^{j\lambda_r't}\sum_{j=1}^kb_j'\kappa_j(r, \tilde c)\epsilon^je^{j\lambda_r't}=\phi_\sigma(t)\Delta(t,\epsilon),  
$$
where 
$$
\Delta(t,\epsilon)= -(\tilde c-c)\frac{\phi'(t)}{\phi(t)} -(\sigma-1)(\phi(t)- r(K\star_{\tilde c} \psi)(t))-r{\gamma(t)} + \epsilon e^{\lambda_r't}L_{k-1}(\epsilon e^{\lambda_r't})+
$$
$$
r\epsilon e^{\lambda_r't}\frac{(K\star_{\tilde c} \psi)(t)}{\phi(t)}F_{k-1}(\epsilon e^{\lambda_r't})+\epsilon^{k+1}\frac{e^{(k+1)\lambda_r't}}{\phi(t)}M_{k-1}(\epsilon e^{\lambda_r't}),
$$
 $L_{k-1}, M_{k-1}, N_{k-1}$ are some real polynomials of degree $k-1$.  
Now, since $$\psi(t-\tilde cs-y)/\phi(t) \leq \sup_{t \in \R}\psi(t+m)/\phi(t) \in \R$$ for $s \geq 0, y \geq -m$, an application of
the  Lebesgue's bounded convergence theorem   together with the Proposition 3 
yields  that 
$$\lim_{t \to -\infty}\frac{(K\star_{\tilde c} \psi)(t)}{\phi(t)}= \frac{b}{1-r}\int_0^{m}\int_{-m}^m K(s,y)e^{-\omega(\tilde cs+y)}dy\; ds , \ \omega \in \{\lambda_r(c), \mu_r(c)\}. 
$$ 

 Taking into account  the above relation together with (\ref{3w}) and the relation $e^{(k+1)\lambda_r't} = o(\phi_\sigma(t))$, $t \to -\infty$, 
 we can conclude that ${\cal D}_1(\Phi_+,\Psi_+)(t) <0,$ $t \in (-\infty, -3-m(1+c)]$,  for all small $\epsilon >0$. Arguing as in Section \ref{Sec4}, we finally obtain that ${\cal D}_1(\Phi_+,\Psi_+)(t)  <0, \ t \in \R$ for all small positive $\epsilon$. 

The proof of the inequality ${\cal D}_2(\Phi_+,\Psi_+)(t) <0, \ t \in \R,$ is the same as in (\ref{22}) (where $c$ should be replaced with $\tilde c$). Next, we can take   $\Psi_-(t)\equiv 0$ and $\Phi_-(t)$ defined in  Section \ref{Exi} as suitable lower solutions (again, in the definition of $\Phi_-(t)$,  $c$ should be replaced with $\tilde c$).  As we know from Section \ref{Exi}, this implies the existence of monotone wavefront $(\Phi(t), \Psi(t))$ propagating with the speed  $\tilde c$. Moreover, arguing as in the second part of  Section \ref{Exi}, we see that this existence result is also valid in the case of normalised  kernels $K$ with non-compact supports. 
 
\noindent (c) Fix now $b >0, \ r \in (0,1)$ and consider 
$$
\frak{C}(b,r) := \{ c \geq 0: \ {\rm system} \ (\ref{3ade}) \ {\rm has\ a\ wavefront \ propagating \ at \ the\ velocity\ } c \}.
$$
As we already know, $\frak{C}(b,r)$ contains the  interval $[2, +\infty)$. Assume now that 
$c \in \frak{C}(b,r), \ c \geq 2\sqrt{1-r},$ and take an arbitrary $\tilde c> c$. Then  there exists a monotone
traveling front for (\ref{3ade})   propagating at the velocity $\tilde c$.  As a consequence, for each $c \in \frak{C}(b,r)$ we obtain  $[c, +\infty) \subset \frak{C}(b,r)$ so that $\frak{C}(b,r)$ is a proper connected unbounded subinterval of $[2\sqrt{1-r}, +\infty)$.  Set $c_*(b,r):= \inf  \frak{C}(b,r)$, then
$c_*(b,r)\in  \frak{C}(b,r)$ by a standard limiting argument applied to the sequence of wavefronts $(\Phi_j(x+c_jt), \Psi_j(x+c_jt))$,  $c_j =c_*(b,r)+1/j$, normalised by the condition $\Phi_j(0) =0.5$ (see Step 2 of Lemma \ref{Le2} for more details).   By Proposition \ref{mudas}, $c_*(b,r) \geq 2\sqrt{1-r},$ which completes the proof of  the theorem. 
\qed

%%%%%%%%%%%%%%%%%%%%%%%%%%%%%%%%%%%%%%%%%%%%%%%%%%%%%%%%%%%%%%%%%%%%%%%%
%%%%%%%%%%%%%%%%%%%%%%%%%%%%%%%%%%%%%%%%%%%%%%%%%%%%%%%%%%%%%%%%%%%%%%%%
\section{Pushed minimal wavefronts in the BZ system}\label{Sec5}
%%%%%%%%%%%%%%%%%%%%%%%%%%%%%%%%%%%%%%%%%%%%%%%%%%%%%%%%%%%%
Our first result shows that large $b>0$ is one of the reasons for the appearance of nonlinearly determined minimal wavefronts in system  (\ref{3ade}). 
\begin{lem}  \label{dac} Fix  $r\in (0,1)$ and suppose that the kernel $K(s,y)$ is a continuous function of $s, y$. Then $c_*(r,b,K) \to 2$ as $b \to +\infty$. In particular, the critical 
wavefront  is obligatorily  pushed  once  $b$ is a sufficiently large positive number. 
\end{lem}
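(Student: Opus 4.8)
The plan is to combine the universal upper bound $c_*(r,b,K)\le 2$ from Theorem~\ref{mumiWWW} with the matching lower estimate $\liminf_{b\to+\infty}c_*(r,b,K)\ge 2$. To obtain the latter, fix $c<2$; it is enough to show that (\ref{3ade}) has no wavefront of speed $c$ once $b$ is large. I would argue by contradiction, assuming there are $b_n\to+\infty$ and wavefronts $(\phi_n,\psi_n)$ of the common speed $c$ for the parameters $b_n$. This is legitimate: by Theorem~\ref{mumiWWW} the admissible speeds for $b_n$ fill $[c_*(r,b_n,K),+\infty)$, and along our sequence $2\sqrt{1-r}\le c_*(r,b_n,K)\le c$, so a wavefront of speed exactly $c$ exists. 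Normalise the profiles by $\phi_n(0)=1/2$.

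Next I would set up compactness. By Theorem~\ref{mude}, $0<\phi_n<1$, $0<\psi_n<1$ and both profiles are increasing; the computation behind (\ref{psk}), carried out with the fixed speed $c$, gives $\sup_{\R}\phi_n'\le 1/(4c)$, after which the first equation of (\ref{3ade}) bounds $\phi_n''$ uniformly (the bracket $1-r-\phi_n+r(K\star\psi_n)$ stays in $(-r,1)$). Hence a subsequence of $\phi_n$ converges in $C^1_{\mathrm{loc}}(\R)$ to a non-decreasing $\phi_*$ with $\phi_*(0)=1/2$. The $\psi_n$ are monotone and bounded --- but, crucially, \emph{not} equi-Lipschitz, since the analogue of (\ref{psk}) only gives $\sup_{\R}\psi_n'=O(b_n)$ --- so Helly's selection theorem provides a further subsequence with $\psi_n\to\psi_*$ pointwise, $\psi_*$ non-decreasing, $0\le\psi_*\le 1$. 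Since $\psi_n\to\psi_*$ at every continuity point of $\psi_*$ and $K\in L^1(\R_+\times\R)$, dominated convergence gives $(K\star\psi_n)(t)\to(K\star\psi_*)(t)$ for every $t$, with $K\star\psi_*$ continuous; passing to the limit in the first equation of (\ref{3ade}) then shows that $\phi_*\in C^2(\R)$ solves $\phi_*''-c\phi_*'+\phi_*(1-r-\phi_*+r(K\star\psi_*))=0$.

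The heart of the proof is identifying $\psi_*$. Integrating the second equation of (\ref{3ade}) exactly as in step~(e) of the proof of Theorem~\ref{mude} gives $\int_{\R}\phi_n(1-\psi_n)\,dt=c/b_n\to 0$, so by Fatou $\int_{\R}\phi_*(1-\psi_*)\,dt=0$. Since $\phi_*$ is continuous, non-decreasing and $\phi_*(0)=1/2$, the set $\{\phi_*>0\}$ is an interval $(t_0,+\infty)$ with $t_0\in[-\infty,0)$, on which $\psi_*\equiv 1$. If $t_0$ is finite, then $\phi_*$ vanishes on $(-\infty,t_0]$, whence $\phi_*(t_0)=\phi_*'(t_0)=0$; viewing the profile equation for $\phi_*$ as a linear second-order ODE with the (continuous) coefficient $1-r-\phi_*+r(K\star\psi_*)$, uniqueness forces $\phi_*\equiv 0$, contradicting $\phi_*(0)=1/2$. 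If $t_0=-\infty$, then $\psi_*\equiv 1$ on $\R$, hence $K\star\psi_*\equiv 1$ and the equation for $\phi_*$ reduces to the KPP--Fisher profile equation $\phi_*''-c\phi_*'+\phi_*(1-\phi_*)=0$; a standard Barbalat-type argument (the limit equation being autonomous, $\phi_*,\phi_*',\phi_*'',\phi_*'''$ are all bounded) shows $\phi_*(\pm\infty)$ are equilibria, hence $\phi_*(-\infty)=0$, $\phi_*(+\infty)=1$, i.e. $\phi_*$ is a non-decreasing KPP--Fisher wavefront of speed $c<2$ --- impossible. Either way we obtain a contradiction, so $c_*(r,b,K)\to 2$ as $b\to+\infty$; since $2\sqrt{1-r}<2$ for $r\in(0,1)$, the critical wavefront is necessarily pushed for all large $b$.

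The step I expect to be the main obstacle is the loss of compactness for $\psi_n$: because $\sup_{\R}\psi_n'$ is only $O(b_n)$, no $C^1$-convergent subsequence of $\psi_n$ is available, and $\psi_*$ is recovered from monotonicity alone and pinned down solely via the identity $\int_{\R}\phi_n(1-\psi_n)=c/b_n$. One must then verify carefully that this still suffices to pass to the limit in the nonlocal term $K\star\psi_n$ (dominated convergence, via $0\le\psi_n\le 1$ and $K\in L^1$) and to recover the limiting equation for $\phi_*$, and that \emph{both} admissible shapes of $\psi_*$ are excluded: the substantive one, $\psi_*\equiv 1$, yielding the KPP--Fisher obstruction, and the degenerate one, $\psi_*=1$ on a half-line only, forcing $\phi_*\equiv 0$.
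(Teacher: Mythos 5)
Your proposal is correct and follows essentially the same route as the paper's proof: a contradiction argument with $b_n\to+\infty$ at a fixed speed $c<2$, Arzel\`a--Ascoli for $\phi_n$ and Helly plus dominated convergence for $\psi_n$ and $K\star\psi_n$, the Fatou/integration identity $\int_\R\phi_n(1-\psi_n)\,dt=c/b_n\to 0$ forcing $\psi_*\equiv 1$ where $\phi_*>0$, and the resulting KPP--Fisher profile obstruction $c\ge 2$. The only cosmetic differences are that the paper passes to the limit through the equivalent integral (variation-of-constants) equation rather than directly in the ODE, and it establishes $\phi_*>0$ first (by the same linear-ODE uniqueness argument you use to exclude the half-line degeneracy) before invoking Fatou.
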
 
While proving this assertion, we will use  estimate (\ref{oza}) given below:
\begin{lem}  \label{mdaF}  Suppose that $b \geq 1,  \ r \in (0,1)$.    Then
\begin{equation}\label{oza}
\phi(t) < \psi(t) < \frac{b}{1-r}\phi(t), \quad t \in \mathbb{R}.
\end{equation}
\end{lem}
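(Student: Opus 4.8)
\textbf{Proof proposal for Lemma \ref{mdaF}.}

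The plan is to obtain each of the two inequalities in \eqref{oza} by the same sliding/maximum-principle technique already used in Theorem \ref{mude} and Lemma \ref{mda}: introduce the difference of the two profiles, note its limiting values at $\pm\infty$, and show that a first violation forces a contradiction with one of the equations in \eqref{3ade}. Throughout we freely use Theorem \ref{mude}, so that $\phi,\psi$ are strictly increasing, $0<\phi<1$, $0<\psi<1$, $\phi(\pm\infty)=\psi(\pm\infty)$ equal $0$ and $1$ respectively.

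First I would prove the left inequality $\phi(t)<\psi(t)$. Set $w(t)=\psi(t)-\phi(t)$; then $w(-\infty)=w(+\infty)=0$. If $w$ were non-positive somewhere, there is an interior point $\tau$ with $w(\tau)\le 0$, $w'(\tau)=0$, $w''(\tau)\ge 0$, i.e. $\psi(\tau)\le\phi(\tau)$, $\psi'(\tau)=\phi'(\tau)$, $\psi''(\tau)\ge\phi''(\tau)$. Subtracting the first equation of \eqref{3ade} from the second at $t=\tau$ and using these relations gives
\begin{equation*}
0 \ \ge\ \psi''(\tau)-\phi''(\tau)\ =\ b\phi(\tau)(1-\psi(\tau)) - \phi(\tau)\bigl(1-r-\phi(\tau)+r(K\star\psi)(\tau)\bigr).
\end{equation*}
Dividing by $\phi(\tau)>0$, the right-hand side equals $b(1-\psi(\tau)) - (1-r) + \phi(\tau) - r(K\star\psi)(\tau)$. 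Since $b\ge 1$, $1-\psi(\tau)\ge 1-\phi(\tau)$ (as $\psi(\tau)\le\phi(\tau)$), and $0<(K\star\psi)(\tau)<1$, this is bounded below by $(1-\phi(\tau)) - (1-r) + \phi(\tau) - r = 0$, in fact strictly positive, a contradiction. Hence $\psi(t)>\phi(t)$ for all $t$.

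Next I would prove the right inequality $\psi(t)<\frac{b}{1-r}\phi(t)$. Write $M=b/(1-r)$ and $z(t)=M\phi(t)-\psi(t)$; then $z(-\infty)=0$ and $z(+\infty)=M-1>0$ (note $b>1-r$ since $b\ge 1>1-r$ for $r\in(0,1)$). If $z$ dips to $\le 0$ somewhere, pick $\tau$ with $z(\tau)\le 0$, $z'(\tau)=0$, $z''(\tau)\ge 0$, i.e. $M\phi(\tau)\le\psi(\tau)$, $M\phi'(\tau)=\psi'(\tau)$, $M\phi''(\tau)\ge\psi''(\tau)$. Form $M\times(\text{first eq.}) - (\text{second eq.})$ of \eqref{3ade} at $\tau$; the $\phi',\psi'$ terms cancel by the gradient condition and the second-derivative terms give $M\phi''(\tau)-\psi''(\tau)\ge 0$, so
\begin{equation*}
0 \ \le\ M\phi(\tau)\bigl(1-r-\phi(\tau)+r(K\star\psi)(\tau)\bigr) - b\phi(\tau)(1-\psi(\tau)).
\end{equation*}
Divide by $\phi(\tau)>0$ and use $M(1-r)=b$ to get $0\le b\bigl(-\phi(\tau)+r(K\star\psi)(\tau)\bigr) + b\psi(\tau) = b\bigl(\psi(\tau)-\phi(\tau)+r(K\star\psi)(\tau)\bigr)$. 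This last quantity is indeed positive — it carries no contradiction by itself — so the naive comparison $z$ is \emph{not} quite enough, and this is the main obstacle. To fix it I would instead compare against a slightly sharper quantity, e.g. run the argument with $z(t)=M\phi(t)-\psi(t)$ but exploit the monotonicity bound $(K\star\psi)(\tau)\le\psi(\tau)$ together with the already-established left inequality, or alternatively prove the strict inequality by first establishing $\psi(t)\le M\phi(t)$ (non-strict, via the above together with Lemma \ref{mda}) and then upgrading to strictness: if equality held at some $\tau$ it would be an interior minimum of $z\ge 0$, forcing $z\equiv 0$ by the uniqueness theorem applied to the linear equation satisfied by $z$ near $\tau$, which contradicts $z(+\infty)=M-1>0$. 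The cleanest route is likely to combine the displayed inequality with Lemma \ref{mda}'s bound $\psi^2(\tau)<M\phi(\tau)\le\psi(\tau)$, giving $\psi(\tau)<1$ (automatic) and, more usefully, $\psi(\tau)>\psi^2(\tau)$ hence information pinning $(K\star\psi)(\tau)$ strictly below $\psi(\tau)$; substituting $(K\star\psi)(\tau)<\psi(\tau)$ into $b(\psi(\tau)-\phi(\tau)+r(K\star\psi)(\tau))$ does not immediately close it either, so in the write-up I expect one must choose the test function as $z(t)=M\phi(t)-\psi(t)$ with the refinement that at a putative first zero one also has $\psi(\tau)\le M\phi(\tau)<\phi(\tau)\cdot M$ and push the algebra using $b\ge 1$ more carefully — this bookkeeping is the only delicate point, the rest being the standard extremum argument.
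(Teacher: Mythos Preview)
Your proof of the left inequality $\phi<\psi$ is essentially the paper's argument, though the displayed line has two cancelling sign slips (you write $0\ge\psi''(\tau)-\phi''(\tau)$ when in fact $w''(\tau)\ge 0$, and you write $\psi''(\tau)-\phi''(\tau)$ equals the bracket when it equals minus the bracket); the net conclusion $0\ge[\text{bracket}]$ is correct and the subsequent estimate closes the argument exactly as in the paper.

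The genuine gap is in the right inequality, and it is purely a sign error --- there is no ``main obstacle''. From $M\times$(first equation)$-$(second equation) one has
\[
z''(\tau)-cz'(\tau)+M\phi(\tau)\bigl(1-r-\phi(\tau)+r(K\star\psi)(\tau)\bigr)-b\phi(\tau)(1-\psi(\tau))=0,
\]
so at the putative minimum $z'(\tau)=0$, $z''(\tau)\ge 0$ forces the bracketed reaction term to be $\le 0$, not $\ge 0$ as you wrote. With the correct sign, dividing by $\phi(\tau)$ and using $M(1-r)=b$ gives
\[
0\;\ge\;-M\phi(\tau)+Mr(K\star\psi)(\tau)+b\psi(\tau).
\]
Now invoke $z(\tau)\le 0$, i.e.\ $M\phi(\tau)\le\psi(\tau)$, so $-M\phi(\tau)\ge -\psi(\tau)$, and the right side is bounded below by $(b-1)\psi(\tau)+Mr(K\star\psi)(\tau)>0$ since $b\ge 1$, $\psi(\tau)>0$, $(K\star\psi)(\tau)>0$. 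This is the desired contradiction and is exactly the paper's proof. All of your proposed refinements (appealing to Lemma~\ref{mda}, upgrading a non-strict inequality, bounding $(K\star\psi)(\tau)$ by $\psi(\tau)$) are unnecessary; note also that $(K\star\psi)(\tau)\le\psi(\tau)$ is false in general for a nonlocal kernel, so that route would not have worked anyway.
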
 
\begin{proof} 
Set $z(t):=b_1\phi(t)- \psi(t)$, where $b_1= b/(1-r)$. 
It is easy to see that
$$
z''(t)-cz'(t)
+\phi(t)\left\{b_1(r(K\star \psi)(t)-\phi(t)) + b\psi(t)\right\}=0.
$$
Since $z(-\infty)=0,\  z(+\infty)=b_1 -1>0,$ we claim that $z(t) >0$ for all $t\in \mathbb{R}.$ Arguing by contradiction, 
we assume  the existence of  $\tau \in \mathbb{R}, $
such that $z(\tau)\leq 0, \ z'(\tau)= 0, \\
z''(\tau)\geq 0$. But  $z(\tau)\leq 0$ implies
$
b_1\phi(\tau)\leq  \psi(\tau)
$
and therefore 
\begin{eqnarray*}
0\geq  b_1(r(K\star \psi)(\tau)-\phi(\tau)) + b\psi(\tau)\geq b_1r(K\star \psi)(\tau)-\psi(\tau) + b\psi(\tau)>0,
\end{eqnarray*}
 a contradiction proving the right-side inequality in (\ref{oza}). 
 
 Next, set $z(t):=\phi(t)- \psi(t)$.  
We have  $z(-\infty)= z(+\infty)= 0,$
so that the non-negativity of $z$ at some points would imply the existence of 
$\tau$ such that $z(\tau)\geq  0, \ z'(\tau)= 0,$ $z''(\tau)\leq 0$. But then $
\phi(\tau)\geq  \psi(\tau)
$ 
and therefore 
\begin{eqnarray*}
0= z''(\tau) + (1-r-b)\phi(\tau) 
+\phi(\tau)(r(K\star \psi)(\tau)-\phi(\tau))+ b \phi(\tau)\psi(\tau), 
\end{eqnarray*}
$$
0 \leq 1-r-b
+r(K\star \psi)(\tau)-\phi(\tau)+ b\psi(\tau) \leq 1-r-b
+r(K\star \psi)(\tau)-\psi(\tau)+ b\psi(\tau)=
$$
$$
-r(1-(K\star \psi)(\tau)) -(b-1)
(1-\psi(\tau))<0, 
$$
a contradiction proving the left-side inequality in (\ref{oza}). \qed
\end{proof}

\begin{proof}[Lemma \ref{dac}] Supposing that $\liminf_{b \to +\infty} c_*(r,b,K) =c_0 <2$, we can find  a sequence $b_j\to +\infty$ such that 
$\lim_{j \to +\infty} c_*(r,b_j,K) =c_0$.  Fix   $c_1\in (c_0,2)$, then the  system
\begin{equation}\label{3adej}\left\{
\begin{array}{ll}
      \phi''(t) - c_1\phi'(t) + \phi(t) (1 - r- \phi(t)+ r(K\star \psi)(t)) =0,
    &    \\
     \psi''(t) - c_1\psi'(t) +b_j \phi(t)(1-\psi(t)) =0, & \\      \phi>0, \psi <1, \      \phi(-\infty)=\psi(-\infty)=0, \  \phi(+\infty)=\psi(+\infty)=1,  &\\ 
\end{array}%
\right.
\end{equation}
 has a monotone wavefront $(\phi_j(t),\psi_j(t))$ normalised by the condition $\phi_j(0)=0.5$. Let $s$ be a critical point of the derivative $\phi'_j$.  Since $\phi''_j(s)=0$, we obtain from the first equation in (\ref{3adej}) that 
 $$\phi_j'(s) = c_1^{-1}\phi_j(s) (1 - r- \phi_j(s)+ r(K\star \psi_j)(s)) \leq (4c_1)^{-1}. 
 $$
 Thus $\sup_{t\in \R} |\phi_j'(t)| \leq 1/(4c_1)$ and by the Arzel\`a-Ascoli theorem, 
 without loss of generality, we can assume that, uniformly on the compact intervals, 
 $\phi_j(t)\to \phi_*(t),$ where $\phi_*: \R \to [0,1], \ \phi_*(0)=0.5,$ is a monotone  continuous function. By the Helly selection theorem,  we can further assume that  $\psi_j(t) \to \psi_*(t)$ point-wise on $\R$, 
where $\psi_*:  \R \to [0,1]$ is a monotone function such that $\psi_*(t)\geq \phi_*(t), \ t \in \R$, see (\ref{oza}). 
Next, as a bounded solution to the first equation in (\ref{3adej}), $\phi_j$ satisfies the  integral equation 
$$
\phi_j(t) = \frac{1}{\mu_r(c_1)-\lambda_r(c_1)}
\int_t^{+\infty}(e^{\lambda_r(c_1) (t-s)}- e^{\mu_r(c_1)
(t-s)})\phi_j(s)\left(\phi_j(s)- r(K\star \psi_j)(s)\right)ds.
$$
Taking the  limit for $j\to +\infty$ implies: 
$$
\phi_*(t) = \frac{1}{\mu_r(c_1)-\lambda_r(c_1)}
\int_t^{+\infty}(e^{\lambda_r(c_1) (t-s)}- e^{\mu_r(c_1)
(t-s)})\phi_*(s)\left(\phi_*(s)- r(K\star \psi_*)(s)\right)ds. 
$$ 
Thus $\phi_*(t)$ satisfies the following  %differential 
equation 
%with continuous coefficients 
\begin{equation}\label{ep}
\phi''_*(t) - c_1\phi'_*(t) + \phi_*(t) (1 - r- \phi_*(t)+ r(K\star \psi_*)(t)) =0.
\end{equation}
 Since $\phi_*(t) \geq 0$ and $\phi_*(0)=0.5,$ it follows that $\phi_*(t) >0$ for all $t \in \R$. 
%
% $$\phi_j(t),\psi_j(t)) \to (\phi_*(t),\psi_*(t)),$$
%where $(\phi_*(t),\psi_*(t))$ is a monotone wavefront to (\ref{3ade}) satisfying the boundary conditions 
 %$$\phi(-\infty)=0, \ \psi(-\infty) \in [0,1], \  \phi(+\infty)=\psi(+\infty)=1.  $$
 
Finally, after integrating the second equation in (\ref{3adej}) on $\R$ and applying the Fatou lemma, we find that 
$$
\int_\R\phi_*(t)(1-\psi_*(t))dt \leq \liminf_{j \to +\infty}\int_\R\phi_j(t)(1-\psi_j(t))dt \leq \lim_{j \to +\infty}\frac{c_1}{b_j} = 0.
$$
Since $0<\phi_*(t) \leq 1, \ 0\leq \psi_*(t) \leq 1, \  \ t \in \R$, this can happen only if $\psi_*(t)\equiv 1$. Consequently, 
we see from (\ref{ep})  that $\phi_*(t)$ is the standard KPP-Fisher profile: 
$$
 \phi_*''(t) - c_1\phi_*'(t) + \phi_*(t) (1 - \phi_*(t)) =0. 
$$
This, however,  implies that $c_1 \geq 2$, a contradiction. \qed
\end{proof} 
Proof of Lemma \ref{dac} also exhibits the limit form of the wavefront $(\phi_b(t),\psi_b(t)), \phi_b(0)=0.5,$ when $b \to +\infty$ and $c \geq 2, r \in (0,1)$ are fixed. Namely, $\lim_{b \to +\infty}(\phi_b(t),\psi_b(t)) = (\phi_*(t),1)$ uniformly on compact sets, where $\phi_*$ is defined above.  The convergence $\lim_{b \to +\infty}\psi_b(t) = 1$ can also be seen from the following proposition:
\begin{lem}  \label{mdaT}  Suppose that $b \geq 1,  \ r \in (0,1), $ $c > 2 \sqrt{1-r}$ and let $\rho(t)$ be the unique positive monotone front 
of the KPP-Fisher equation 
\begin{equation}\label{rp}
\rho''(t)-c\rho'(t)+ (1-r)\rho(t)(1-\rho(t))=0,  
\end{equation}
normalised by the condition 
$$
\lim_{t \to -\infty} \rho(t)e^{-\lambda_r(c) t}= \lim_{t \to -\infty} \psi(t)e^{-\lambda_r(c) t} = \frac{b}{1-r}.$$
Then 
\begin{equation}\label{ozazu}
\rho(t) < \psi(t), \quad t \in \mathbb{R}.
\end{equation}
\end{lem}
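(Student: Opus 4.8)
The plan is to promote $\psi$ to a supersolution of the scalar KPP--Fisher equation (\ref{rp}) by means of Lemma \ref{mdaF}, and then to compare it with the genuine front $\rho$ through a sliding argument. The right-hand inequality in (\ref{oza}), valid since $b\ge1$, says $b\phi(t)>(1-r)\psi(t)$; multiplying by $1-\psi(t)>0$ (recall $\psi<1$ by Theorem \ref{mude}) and using the second equation of (\ref{3ade}) we get
$$
\psi''(t)-c\psi'(t)+(1-r)\psi(t)(1-\psi(t))=(1-\psi(t))\big((1-r)\psi(t)-b\phi(t)\big)<0,\qquad t\in\R,
$$
so $\psi$ is a strict supersolution of (\ref{rp}), while $\rho$ is its monotone solution; both increase from $0$ to $1$, and, by the normalisation, $\rho(t)e^{-\lambda_r(c)t}\to b/(1-r)$ and $\psi(t)e^{-\lambda_r(c)t}\to b/(1-r)$ as $t\to-\infty$.

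For $\sigma\in\R$ set $\rho_\sigma(t):=\rho(t+\sigma)$, again a solution of (\ref{rp}). First I show $\rho_\sigma<\psi$ on $\R$ for all sufficiently negative $\sigma$. Near $-\infty$ this is forced by the matched leading terms: $\rho_\sigma(t)e^{-\lambda_r(c)t}\to e^{\lambda_r(c)\sigma}b/(1-r)<b/(1-r)$ once $\sigma<0$, hence $\rho_\sigma<\psi$ on some half-line $(-\infty,-L]$. Near $+\infty$ I compare the rates of approach to the stable state: linearising $1-\rho$ and $1-\psi$ at $0$ gives the equations with characteristic polynomials $z^2-cz-(1-r)$ and, because $\phi\to1$, $z^2-cz-b$; as $b\ge1>1-r$, the negative root of the latter lies strictly below that of the former, so $1-\psi(t)=o(1-\rho(t))=o(1-\rho_\sigma(t))$ as $t\to+\infty$, and thus $\rho_\sigma<\rho<\psi$ on some $[T,+\infty)$ for every $\sigma\le0$. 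On the leftover compact interval $[-L,T]$ one has $\psi\ge\min_{[-L,T]}\psi>0$, whereas $\rho_\sigma(t)\le\rho(T+\sigma)\to0$ as $\sigma\to-\infty$; this settles the claim.

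Now let $\sigma_*:=\sup\{\sigma:\rho_\sigma\le\psi\text{ on }\R\}$; this set is non-empty by the previous step, closed by continuity, and bounded above since $\rho_\sigma(t)=\rho(t+\sigma)\to1>\psi(t)$ as $\sigma\to+\infty$. A touching argument shows $\rho_{\sigma_*}<\psi$ strictly: if $w:=\psi-\rho_{\sigma_*}\ge0$ vanished at a finite point $t_0$, then $t_0$ would be an interior minimum, so $w'(t_0)=0$, $w''(t_0)\ge0$; subtracting the equations at $t_0$, where moreover $\psi(t_0)(1-\psi(t_0))=\rho_{\sigma_*}(t_0)(1-\rho_{\sigma_*}(t_0))$, gives $w''(t_0)<0$, a contradiction. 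Suppose $\sigma_*<0$. Fixing $\sigma_1\in(\sigma_*,0)$, the estimates of the previous paragraph are uniform for $\sigma\in[\sigma_*,\sigma_1]$ near $\pm\infty$ (they only used $\sigma\le\sigma_1<0$), while on the compact middle block $\psi-\rho_{\sigma_*}$ is bounded below by a positive constant; by uniform continuity of $\sigma\mapsto\rho_\sigma$ we still obtain $\rho_\sigma<\psi$ on $\R$ for $\sigma$ slightly above $\sigma_*$, contradicting maximality. Hence $\sigma_*\ge0$, so $\rho=\rho_0\le\psi$, and a last application of the touching argument upgrades this to the strict inequality $\rho<\psi$ of (\ref{ozazu}).

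I expect the delicate part to be the behaviour at $-\infty$: there the KPP nonlinearity is of unstable (monostable) type, so a direct maximum-principle comparison between $\rho$ and the supersolution $\psi$ is unavailable, and one genuinely needs the equality of the leading coefficients $b/(1-r)$ in front of $e^{\lambda_r(c)t}$ to open the gap that powers the sliding. The assumption $b\ge1$ (so $b>1-r$) is equally essential at the other end, where it makes $1-\psi$ decay faster than $1-\rho$ and thereby secures $\rho<\psi$ near $+\infty$.
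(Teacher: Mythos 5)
Your argument is correct, and its first step -- turning the right-hand inequality of Lemma \ref{mdaF} into the strict differential inequality $\psi''-c\psi'+(1-r)\psi(1-\psi)<0$, so that $\psi$ is a strict supersolution of (\ref{rp}) -- is exactly the paper's. After that the two proofs part ways. The paper stays inside the sub/supersolution framework: it exhibits the explicit lower solution $\psi_-(t)=\frac{b}{1-r}\max\{0,e^{\lambda_r(c)t}(1-Le^{\delta t})\}$ lying below $\psi$, deduces the existence of a KPP--Fisher front $\hat\psi$ squeezed between $\psi_-$ and $\psi$ with the prescribed asymptotics at $-\infty$, and then invokes uniqueness (up to translation) of the front of (\ref{rp}) to identify $\hat\psi$ with $\rho$; the inequality $\rho<\psi$ then follows at once, and no information at $+\infty$ is needed. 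You instead run a sliding comparison directly between the translates $\rho_\sigma$ and the supersolution $\psi$, which dispenses with both the existence-between-ordered-pairs machinery and the uniqueness theorem, at the price of two-sided asymptotics: the matched coefficients $b/(1-r)$ at $-\infty$ (used by both proofs) and, at $+\infty$, the strict ordering of the decay rates of $1-\psi$ and $1-\rho$, which is where $b\ge 1>1-r$ enters a second time. That last step is the only place you should be more explicit: $1-\rho(t)\ge c_0e^{\nu t}$ with $\nu=\frac{1}{2}\left(c-\sqrt{c^2+4(1-r)}\right)$ is the standard noncritical KPP--Fisher asymptotics at $+\infty$, while for $\theta=1-\psi$ the equation $\theta''-c\theta'-b\phi(t)\theta=0$ with $\phi\to1$ gives $\theta'(t)/\theta(t)\to\frac{1}{2}\left(c-\sqrt{c^2+4b}\right)<\nu$ by the Hartman asymptotic theorem (the same result the paper uses in the Appendix), so $1-\psi=o(1-\rho)$ indeed holds. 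With that supplied, your uniform estimates near $\pm\infty$, the compact-interval continuity argument and the no-touching computation (which uses the strictness of the supersolution inequality) all go through, and the conclusion $\sigma_*\ge0$, hence $\rho<\psi$ on $\R$, is sound.
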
 
\begin{proof} By Lemma \ref{mdaF}, 
$$
0= \psi''(t)-c\psi'(t) + b\phi(t)(1-\psi(t)) > \psi''(t)-c\psi'(t) + (1-r)\psi(t)(1-\psi(t)), \ t \in \R,
$$
so that $\psi(t)$ is an upper solution for (\ref{rp}).  Next, 
it is easy to verify that, for appropriate $L \gg 1$ and $0 < \delta \ll 1$, 
$$
\psi_-(t)=  \frac{b}{1-r}\max\{0, e^{\lambda_r(c) t}(1-Le^{\delta t})\}
$$
is a lower solution of (\ref{rp}) satisfying the inequality $\psi_-(t) < \psi(t), \ t \in \R$. As a consequence, there exists a  wavefront $\hat \psi(t)$ such that $\psi_-(t) < \hat \psi(t) < \psi(t)$ and 
$$
\lim_{t \to -\infty} \hat \psi(t)e^{-\lambda_r(c) t} = \frac{b}{1-r}.$$
Since the wavefront %$\rho(t)$ 
to  (\ref{rp}) is unique (up to a translation), we can conclude that  $\rho(t)=\hat \psi(t)$ and inequality (\ref{ozazu}) follows. \qed
\end{proof}

\begin{example}[Continuation of Example 7]\label{wsc} Here we present numerical simulations confirming the conclusion of Lemma \ref{dac}. For the weak delay kernel case,  system (\ref{1}) can be reformulated as  follows:
\begin{equation}\label{3equations}
\begin{array}{ll}
     u_t(t,x) = \Delta u(t,x)  + u(t,x)(1-u(t,x)-rw(t,x)), &    \\
     v_t(t,x) = \Delta v(t,x)  -b u(t,x)v(t,x), &  \\ 
     w_t(t,x) = \Delta w(t,x)  + \frac{1}{\tau} (v(t,x)-w(t,x)),
\end{array}%
\end{equation}
where $w(t,x)=(K_w*v)(t,x)$, see \cite{DQ} for details.
We will consider the initial functions 
\begin{equation} \label{bce3}
u(0,x)= \begin{cases} 
      0 & x < 0, \\
      1 & x \geq 0,
      \end{cases}, \quad 
v(0,x)=w(0,x)= \begin{cases} 
      1 & x < 0, \\
      0 & x \geq 0.
      \end{cases}
\end{equation}
It is a "folk theorem", proved for many relevant monostable models, that  initial signals with bounded (or semi-bounded as in (\ref{bce3})) supports  should invade the unoccupied  side of space with the asymptotic speed of propagation $c_\circ$  equal to the minimal speed $c_*$. We will use this principle to estimate numerically the minimal speed $c_*(r,b,\tau)$ for system (\ref{1}) considered with the weak delay kernel. 
Our simulations are based on the Crank-Nicholson method which is second-order accurate in both spatial and temporal directions. 
The spatial step size is chosen as $\Delta x=0.05$ in the computational interval $x \in [-450,50]$ together with the Dirichlet boundary conditions $u(t,-450)=0$, $u(t,50)=1$ and $v(t,-450)=w(t,-450)=1$, $v(t,50)=w(t,50)=0$. The temporal step size is $\Delta t=0.01$.

The table on Figure 1 (left) gives numerically calculated speeds of propagation  for some values of  $b$, $\tau$ and $r=0.75$. These instantaneous speeds are computed at some appropriate moments $t_{b,r} \in [250,400]$ which depend on $b, r$. `Unsettled'  digits that would change if we will continue to calculate speeds for larger moments $t_{b,r}$  are marked in red. Interestingly, for the highest values of the parameter $b$, we obtain reliable estimates of minimal speeds at the lowest  values of $t_{b,r}$. We attribute this phenomenon to the better stability properties of pushed waves. Obviously, the obtained values cannot be understood as the precise asymptotic speeds of propagation $c_\circ(3/4,b, \tau)$.

Particularly, for each $\tau >0$ the linearly determined minimal speed $c_*(3/4,b,\tau)$ is 1. Actually, it can be seen from (\ref{weakkernel})  that $c_*(3/4,b,\tau)=1$ for $b \in (0,1/3+\tau/12]$. It is worth noting rather slow, `logarithmic scale' convergence, as $b \to +\infty$, of  the  speed of propagation $c_\circ(3/4,b,\tau)\approx c_*(3/4,b,\tau)$ to its  limit value 2.  See also the right frame of Figure 1  for the dynamics of component $u$ of the solution for problem (\ref{3equations}), (\ref{bce3}).  
\begin{figure}[h]\label{Fig1}
\vspace{-33mm}

\hspace{-20mm} {\includegraphics[width=9cm]{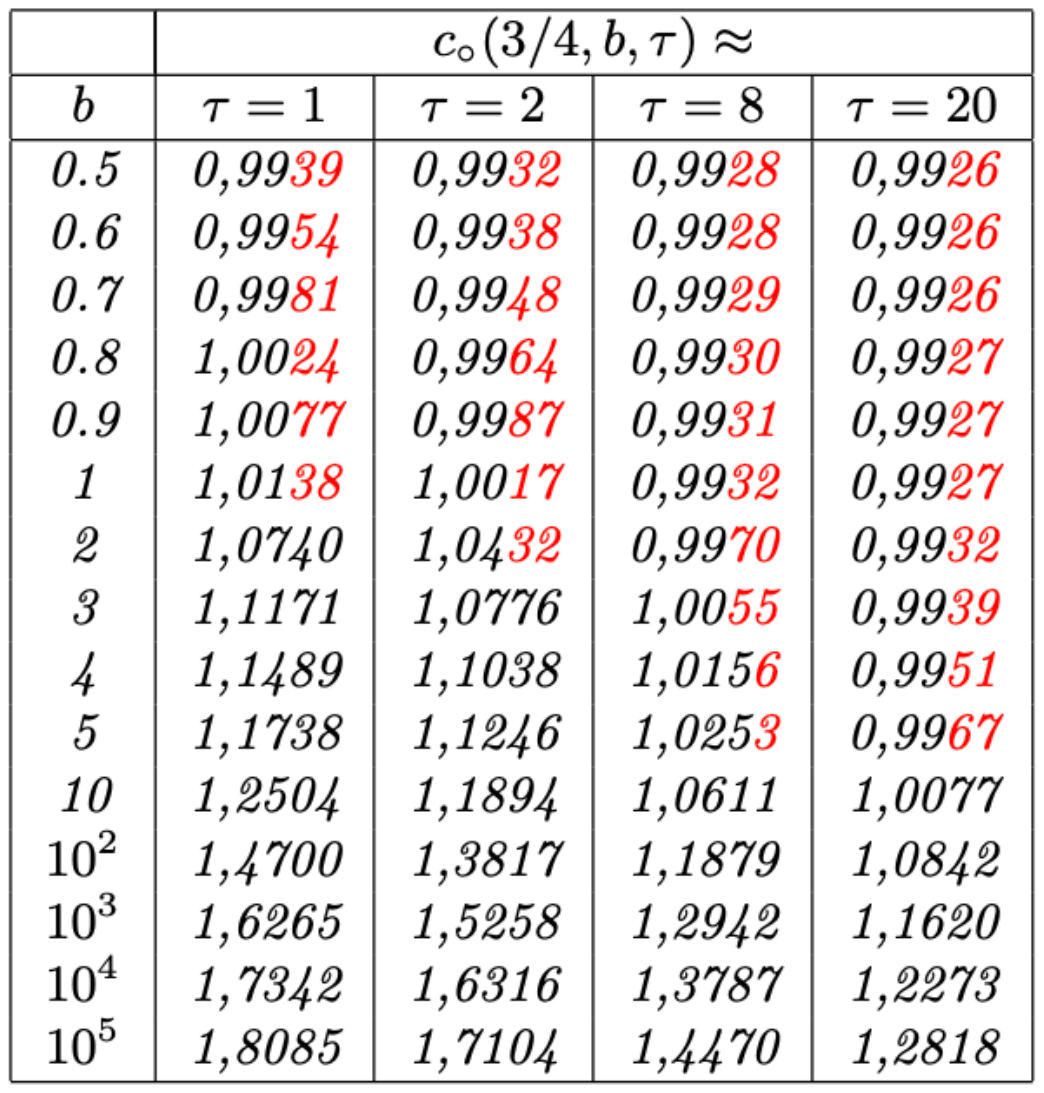}}\hspace{-15mm}{\includegraphics[width=9cm]{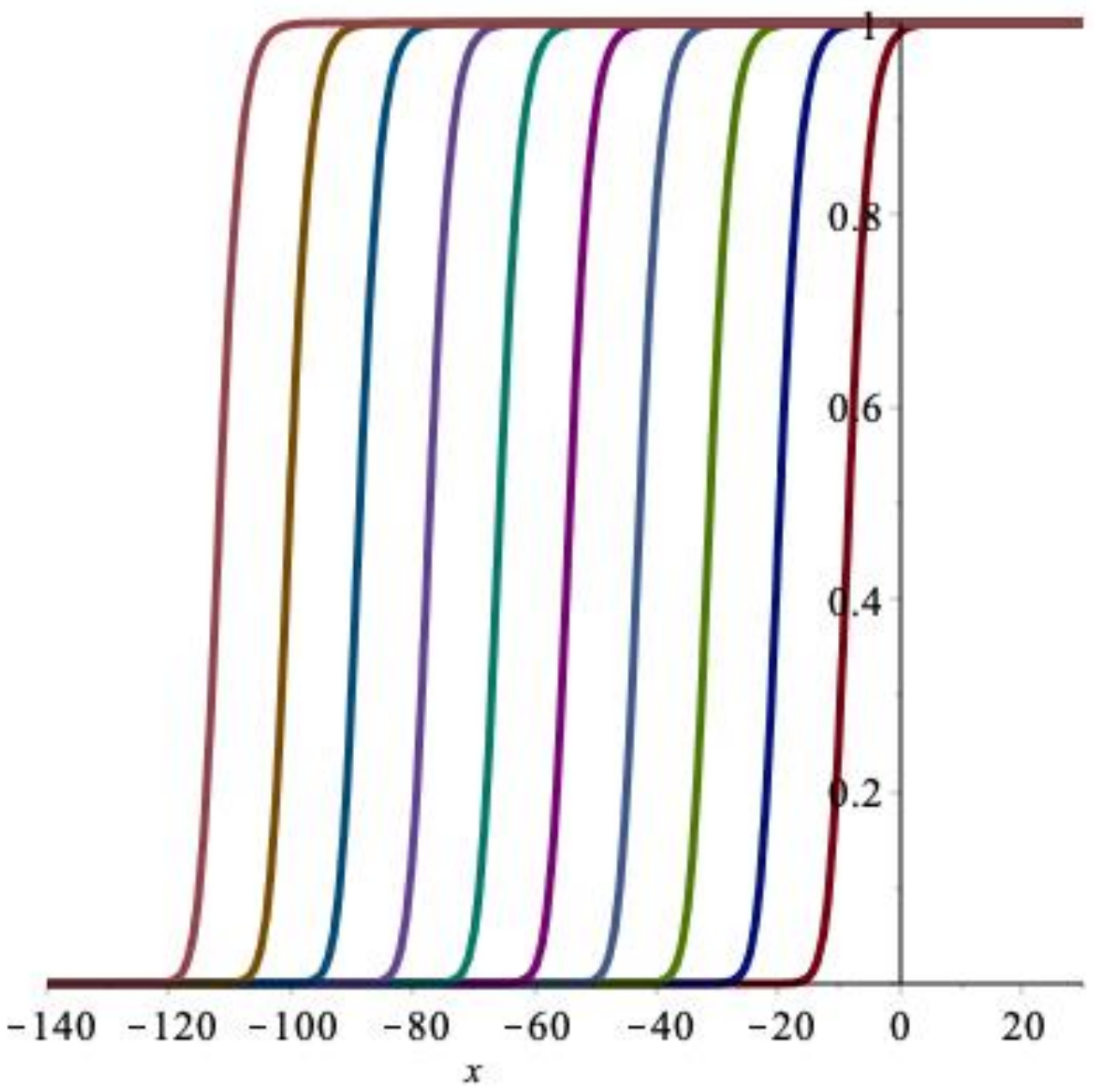}}

\vspace{-33mm}
\caption{On the left: numerically calculated speeds of propagation  for some values of  $b$, $\tau$ and $r=0.75$. On the right: graph of the component $u$ of the solution for system  (\ref{3equations}) considered with $\tau = 1$, $r=0.75$, $b=4$.  The solution is shown in times $t=10, 20, 30, ..., 100$.
}
\end{figure}
\end{example}

As our next result shows, the second reason for the appearance of pushed wavefronts  lies in the  asymmetry of the influence of the density distribution $u(t,x)$ on the  dynamics: 
\begin{lem}  \label{dac1} Fix some $r\in (0,1)$ and  set $K_a(s,y):=K(s,y+a)$.  Then $c_*(a):=c_*(r,b,K_a) \to 2$ as $a \to +\infty$. \end{lem}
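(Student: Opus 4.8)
The plan is to argue by contradiction, transcribing the scheme of the proof of Lemma \ref{dac} with one new ingredient: a uniform‑in‑$j$ lower bound for the bromide profiles far to the right. Since $c_*(r,b,K_a)\in[2\sqrt{1-r},2]$ for every $a$ by Theorem \ref{mumiWWW} and Proposition \ref{mudas}, it suffices to prove that $\liminf_{a\to+\infty}c_*(a)\ge 2$. Suppose the contrary: there is a sequence $a_j\to+\infty$ with $c_*(a_j)\to c_0<2$. Fix $c_1\in(c_0,2)$; since $c_0\ge 2\sqrt{1-r}$ we have $c_1>2\sqrt{1-r}$, so the zeros $0<\lambda_r(c_1)<\mu_r(c_1)$ of $\chi_r(\cdot,c_1)$ are real and distinct. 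For all large $j$, $c_1>c_*(a_j)$, hence system (\ref{3ade}) considered with kernel $K_{a_j}$ and speed $c_1$ admits a positive monotone wavefront $(\phi_j,\psi_j)$, which I normalise by $\phi_j(0)=0.5$.

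First I would record the standard uniform estimates, exactly as in Lemma \ref{dac}: evaluating the first equation of (\ref{3ade}) at the critical points of $\phi_j'$ and using $0\le(K_{a_j}\star_{c_1}\psi_j)(t)\le 1$ yields $\sup_{t\in\R}\phi_j'(t)\le 1/(4c_1)$, so by the Ascoli--Arzel\`a theorem some subsequence of $\phi_j$ converges, uniformly on compact sets, to a non-decreasing $\phi_*:\R\to[0,1]$ with $\phi_*(0)=0.5$; as in step (a) of Section \ref{Sec2}, $\phi_*(t)>0$ for all $t$. The point where the argument departs from Lemma \ref{dac} is the identification of the limit of the nonlocal term: after the substitution $z=y+a_j$ one has
$$(K_{a_j}\star_{c_1}\psi_j)(t) = \int_0^{+\infty}\int_\R K(s,z)\,\psi_j(t-c_1 s-z+a_j)\,dz\,ds,$$
and I claim this tends to $1$ as $j\to+\infty$, uniformly for $t$ in compact sets. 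Indeed, integrating the second equation of (\ref{3ade}) over $\R$ (as in step (e) of Section \ref{Sec2}) gives $b\int_\R\phi_j(1-\psi_j)\,dt=c_1$, and since $\phi_j$ is increasing with $\phi_j(0)=0.5$ this forces $\int_0^{+\infty}(1-\psi_j(t))\,dt\le 2c_1/b$, whence, by monotonicity of $1-\psi_j$, the uniform bound $1-\psi_j(t)\le 2c_1/(bt)$ for $t>0$. Given $\delta>0$, choose $R$ so large that the contribution of $K$ over the region $\{s>R\}\cup\{|z|>R\}$ is less than $\delta$; on the complementary region $\psi_j(t-c_1 s-z+a_j)\ge\psi_j(t-(c_1+1)R+a_j)\ge 1-\delta$ once $j$ is large enough that $t-(c_1+1)R+a_j\ge 2c_1/(b\delta)$, so $(K_{a_j}\star_{c_1}\psi_j)(t)\ge(1-\delta)^2$; together with the trivial bound $\le 1$ this proves the claim.

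Next I would pass to the limit in the integral form of the first equation,
$$\phi_j(t) = \frac{1}{\mu_r(c_1)-\lambda_r(c_1)}\int_t^{+\infty}\bigl(e^{\lambda_r(c_1)(t-s)}-e^{\mu_r(c_1)(t-s)}\bigr)\phi_j(s)\bigl(\phi_j(s)-r(K_{a_j}\star_{c_1}\psi_j)(s)\bigr)\,ds,$$
using the dominated convergence theorem (with the integrable majorant $\bigl(\mu_r(c_1)-\lambda_r(c_1)\bigr)^{-1}(1+r)\bigl(e^{\lambda_r(c_1)(t-s)}+e^{\mu_r(c_1)(t-s)}\bigr)$ on $[t,+\infty)$), the pointwise convergence $\phi_j\to\phi_*$, and the convergence $(K_{a_j}\star_{c_1}\psi_j)(s)\to 1$ just established. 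The limit relation is the integral form of $\phi_*''-c_1\phi_*'+\phi_*(1-\phi_*)=0$, so $\phi_*$ is a bounded positive non-decreasing solution of the pure KPP-Fisher equation with $\phi_*(0)=0.5$; its limits at $\pm\infty$ are equilibria, hence $\phi_*(-\infty)=0$, $\phi_*(+\infty)=1$, and $\phi_*$ is a KPP-Fisher wavefront of speed $c_1<2$ --- a contradiction. Therefore $\liminf_{a\to+\infty}c_*(a)\ge 2$, and since $c_*(a)\le 2$ for every $a$, we conclude $c_*(a)\to 2$. I expect the main obstacle to be precisely the middle step --- controlling $(K_{a_j}\star_{c_1}\psi_j)(t)$ uniformly in $j$ when $K$ is allowed to have a fat tail. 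The a priori bound $1-\psi_j(t)\le 2c_1/(bt)$, which follows from the conserved integral $b\int_\R\phi_j(1-\psi_j)\,dt=c_1$ together with the fixed normalisation $\phi_j(0)=0.5$, is exactly the device that renders the tail of $K$ harmless; everything else is a routine adaptation of the proof of Lemma \ref{dac}.
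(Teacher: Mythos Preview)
Your proof is correct and follows the same contradiction framework as the paper's: extract a subsequential limit $\phi_*$ of normalised wavefronts at a speed $c_1<2$, show that the nonlocal term $(K_{a_j}\star_{c_1}\psi_j)(t)$ tends to $1$, and conclude that $\phi_*$ is a KPP--Fisher front of speed $c_1<2$, which is impossible.

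The genuine difference is in the middle step. The paper also extracts a limit $\psi_*$ for the $\psi_j$ (via Ascoli--Arzel\`a, using the bound $\psi_j'\le b/c_1$), shows $\psi_*(+\infty)=1$ from Fatou applied to the identity $b\!\int_\R\phi_j(1-\psi_j)=c_1$, invokes Helly's theorem for the monotone functions $(K_{a_j}\star_{c_1}\psi_j)$, and then lower--bounds the limit using monotonicity of $\psi_j$ and $a_j\ge\alpha$ to get
$\liminf_j(K_{a_j}\star_{c_1}\psi_j)(t)\ge\!\int K(s,y)\psi_*(t-c_1s-y+\alpha)\,dy\,ds\to\psi_*(+\infty)=1$ as $\alpha\to+\infty$. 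You replace this soft argument by a direct quantitative bound: from the same identity $b\!\int_\R\phi_j(1-\psi_j)=c_1$ and $\phi_j\ge 1/2$ on $[0,\infty)$ you get $\int_0^\infty(1-\psi_j)\le 2c_1/b$, hence $1-\psi_j(t)\le 2c_1/(bt)$ uniformly in $j$, which feeds straight into a tail/core splitting of $K$ and yields $(K_{a_j}\star_{c_1}\psi_j)(t)\ge(1-\delta)^2$ once $a_j$ is large. Your route is more elementary---it avoids Helly's theorem and the auxiliary limit $\psi_*$ entirely---and gives an explicit rate, at the (mild) cost of requiring the one extra observation that a non-increasing integrable function decays like $1/t$. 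One small ordering quibble: you cite step (a) of Section~\ref{Sec2} for $\phi_*>0$ before establishing that $\phi_*$ satisfies any equation; the positivity should be asserted \emph{after} passing to the limit in the integral equation (as the paper does), but this does not affect the validity of the argument.
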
 

\begin{proof} If $\liminf_{a \to +\infty} c_*(a) =c_0 <2$, then there exists a sequence $a_j\to +\infty$ such that 
$\lim_{j \to +\infty} c_*(a_j) =c_0$.  Fix some  $c_1\in (c_0,2)$, then the  system
\begin{equation}\label{3adea}\left\{
\begin{array}{ll}
      \phi''(t) - c_1\phi'(t) + \phi(t) (1 - r- \phi(t)+ r(K_{a_j}\star \psi)(t)) =0,
    &    \\
     \psi''(t) - c_1\psi'(t) +b\phi(t)(1-\psi(t)) =0, & \\      \phi>0, \psi <1, \      \phi(-\infty)=\psi(-\infty)=0, \  \phi(+\infty)=\psi(+\infty)=1,  &\\ 
\end{array}%
\right.
\end{equation}
has a monotone wavefront $(\phi_j(t),\psi_j(t))$ normalised by the condition $\phi_j(0)=0.5$. 
It follows from the proof of Lemma \ref{dac} that  $\sup_{t\in \R} |\phi_j'(t)| \leq 1/(4c_1)$ and similar arguments show that 
$\sup_{t\in \R} |\psi_j'(t)| \leq b/c_1$.  Thus, by the Arzel\`a-Ascoli theorem, 
 without loss of generality, we can assume that, uniformly on the compact intervals, 
$$(\phi_j(t),\psi_j(t)) \to (\phi_*(t),\psi_*(t)),$$
where $0\leq  \phi_*(t), \psi_*(t)\leq 1, \ t \in \R, \ \phi_*(0)=0.5$, are continuous monotone functions. 
After integrating the second equation in (\ref{3adea}) on $\R$ and applying the Fatou lemma, we find that 
$$
\int_\R\phi_*(t)(1-\psi_*(t))dt \leq \liminf_{j \to +\infty}\int_\R\phi_j(t)(1-\psi_j(t))dt = \frac{c_1}{b}.
$$
Since $0.5\leq \phi_*(t) \leq 1, \ t \geq 0 $, this can happen only if $\psi_*(+\infty)= 1$.

Furthermore, applying the Helly selection theorem,  we can assume that  $$(K_{a_j}\star\psi_j)(t)  \to p_*(t)$$ point-wise on $\R$, 
where $p_*:  \R \to [0,1]$ is a monotone function.  
Thus for every fixed positive $\alpha$ and sufficiently large $j$, the monotonicity of $\psi_j$ implies that 
$$
\lim_{j \to +\infty} (K_{a_j}\star\psi_j)(t)= \lim_{j \to +\infty} \int_0^{+\infty}\int_\R K(s,y)\psi_j(t-cs-y+a_j)dy\; ds\geq
$$
$$
\lim_{j \to +\infty} \int_0^{+\infty}\int_\R K(s,y)\psi_j(t-cs-y+\alpha)dy\; ds= \int_0^{+\infty}\int_\R K(s,y)\psi_*(t-cs-y+\alpha)dy\; ds. 
$$
Thus, for every fixed $t \in \R$, 
$$
\lim_{j \to +\infty} (K_{a_j}\star\psi_j)(t)\geq \sup_{\alpha>0}  \int_0^{+\infty}\int_\R K(s,y)\psi_*(t-cs-y+\alpha)dy\; ds=\psi_*(+\infty)=1.$$
Therefore, taking into account that $(K_{a_j}\star\psi_j)(t)\leq 1$, we conclude that 
$$
\lim_{j \to +\infty} (K_{a_j}\star\psi_j)(t)=1, \ t \in \R.
$$
As a bounded solution of the first equation in (\ref{3adea}), $\phi_j$ satisfies the following integral equation 
$$
\phi_j(t) = \frac{1}{\mu_r(c_1)-\lambda_r(c_1)}
\int_t^{+\infty}(e^{\lambda_r(c_1) (t-s)}- e^{\mu_r(c_1)
(t-s)})\phi_j(s)\left(\phi_j(s)- r(K_{a_j}\star \psi_j)(s)\right)ds,
$$
from where, after taking the limit as $j\to +\infty$,  
$$
\phi_*(t) = \frac{1}{\mu_r(c_1)-\lambda_r(c_1)}
\int_t^{+\infty}(e^{\lambda_r(c_1) (t-s)}- e^{\mu_r(c_1)
(t-s)})\phi_*(s)\left(\phi_*(s)- r\right)ds. 
$$ 
Thus the monotone function $\phi_*(t)$ satisfies the KPP-Fisher differential equation 
$$
\phi''(t) - c_1\phi'(t) + \phi(t) (1 - \phi(t)) =0.
$$
 Since  $0\leq \phi_*(t) \leq 1$ and $\phi_*(0)=0.5$, we have  that $0<\phi_*(t) <1$ for all $t \in \R$. 
In addition, $\phi_*: \R \to (0,1)$ is a non-decreasing  function. This means that $\phi_*(t)$ is a standard KPP-Fisher profile and, consequently,
 $c_1 \geq 2$, a contradiction. \qed
\end{proof} 
\section*{Appendix}
\begin{lem} \label{OWL} Fix $c>0$ and suppose that $b>0$ and $\phi(t)$ is a positive non-decreasing continuous  function such that $\phi(+\infty)=1$ and 
$\int_{-\infty}^0\phi(t)\;dt$ is finite.  Then there exists a unique strictly monotone $C^2$-smooth solution $\psi(t)$ of the boundary value problem 
$$
\psi''(t)-c\psi'(t) + b\phi(t)(1-\psi(t))=0, \quad \psi(-\infty)=0, \ \psi(+\infty)=1.  
$$
This defines the operator $\frak{L}_b$, $\psi= \frak{L}_b\phi$ on the respective functional sets.  $\frak{L}_b$ commutes with the translation operator, $(\frak{L}_b \phi(\cdot+h))(t)= (\frak{L}_b \phi(\cdot))(t+h)$, and is 
monotone increasing with respect to $b$ and $\phi$: 
\begin{itemize}
\item[a)] if $\phi_1(t) \leq \phi_2(t)$ then  
$(\frak{L}_b\phi_1)(t) \leq  (\frak{L}_b\phi_2)(t), \ t \in \R$;
\item[b)]  if $b_1 < b_2$ then $(\frak{L}_{b_1}\phi)(t) < (\frak{L}_{b_2}\phi)(t), \ t \in \R$, for each $\phi$ from the domain of $\frak{L}$. 
\end{itemize} 
\end{lem}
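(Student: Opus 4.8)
The plan is to treat the boundary value problem as a fixed-point problem obtained by inverting the linear operator $L\psi := \psi'' - c\psi'$ together with the $0$-order term, and then exploit the sign structure to get monotonicity and the asymptotics. First I would rewrite the equation as $\psi'' - c\psi' - b\phi(t)\psi = -b\phi(t)$, i.e. $\psi$ is a bounded solution of a linear inhomogeneous ODE with a nonnegative potential $b\phi(t)$ and a nonnegative forcing $b\phi(t)$. For the homogeneous operator $\psi \mapsto \psi'' - c\psi'$ I would use the Green's kernel built from the roots $z_1 < 0 < z_2$ of $z^2 - cz = 0$, namely $z_1 = 0$ and $z_2 = c$ — but since one root is zero it is cleaner to shift: subtract a positive constant $\beta>0$ from the potential, write $\psi'' - c\psi' - \beta\psi = -b\phi(1-\psi) - \beta\psi$, let $z_1<0<z_2$ now solve $z^2-cz-\beta=0$, and set
$$
(\mathcal{T}\psi)(t) = \frac{1}{z_2-z_1}\left(\int_{-\infty}^t e^{z_1(t-s)}F(\psi)(s)\,ds + \int_t^{+\infty} e^{z_2(t-s)}F(\psi)(s)\,ds\right),
$$
with $F(\psi)(s) = b\phi(s)(1-\psi(s)) + \beta\psi(s)$. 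On the set of continuous $\psi$ with $0\le\psi\le 1$, the map $F$ is nonnegative and, for $\beta$ large enough (say $\beta \ge b$), nondecreasing in $\psi$; hence $\mathcal{T}$ is a monotone operator preserving $[0,1]$-valued functions, with $\mathcal{T}(0) \ge 0$ and $\mathcal{T}(1) \le 1$. Iterating from $\psi^{(0)} \equiv 0$ upward and from $\psi^{(0)}\equiv 1$ downward produces monotone sequences converging (by dominated convergence, using that $\int_{-\infty}^0\phi<\infty$ guarantees $\mathcal{T}\psi$ is well defined and the forcing integral at $-\infty$ converges) to solutions; I would then argue the two limits coincide to get existence and, simultaneously, a sandwich that pins the solution.

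The uniqueness is the step I expect to require the most care. Given two bounded solutions $\psi_1,\psi_2$ on $[0,1]$, their difference $w = \psi_1 - \psi_2$ solves $w'' - cw' - b\phi(t)w = 0$ with $w(\pm\infty)=0$; since $b\phi(t)>0$ everywhere, a maximum-principle argument (a positive interior maximum of $w$ would force $w''\le 0$, $w'=0$, hence $b\phi w \le 0$, contradicting $w>0$ there; similarly for a negative minimum) forces $w\equiv 0$. The monotonicity $\psi'(t)>0$ follows the template of part (d) of the proof of Theorem \ref{mude}: if $\psi'(s)=0$ at some point then from the equation $\psi''(s) = c\psi'(s) - b\phi(s)(1-\psi(s)) = -b\phi(s)(1-\psi(s)) < 0$ (using $\psi<1$, which comes from the sandwich $\psi\le\mathcal{T}(1)\le 1$ and the strict version via the maximum principle applied to $1-\psi$), so every critical point is a strict local max; combined with $\psi(-\infty)=0$, $\psi(+\infty)=1$ this excludes any critical point, giving strict monotonicity. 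Smoothness is immediate by bootstrapping the integral equation.

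Finally the structural properties. Translation invariance is clear since the equation has $t$-dependence only through $\phi$, so replacing $\phi$ by $\phi(\cdot+h)$ replaces the solution by $\psi(\cdot+h)$, and uniqueness makes this an equality of operators. For monotonicity in $\phi$ and $b$: if $\phi_1\le\phi_2$, then with $\psi_2 = \frak{L}_b\phi_2$ one checks $\psi_2'' - c\psi_2' + b\phi_1(t)(1-\psi_2(t)) = b(\phi_1-\phi_2)(t)(1-\psi_2(t)) \le 0$, so $\psi_2$ is a supersolution for the $\phi_1$-problem; since $0$ is a subsolution, the iteration scheme (or a direct comparison argument using the maximum principle on $\psi_2 - \frak{L}_b\phi_1$) yields $\frak{L}_b\phi_1 \le \psi_2$. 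The $b$-monotonicity is analogous, with strictness coming from the strong maximum principle: the difference $\frak{L}_{b_2}\phi - \frak{L}_{b_1}\phi$ cannot vanish at an interior point without vanishing identically, which is impossible since the two satisfy genuinely different equations where $\phi>0$. The main obstacle throughout is bookkeeping the convergence of the improper integrals at $-\infty$, which is exactly where the hypothesis $\int_{-\infty}^0\phi(t)\,dt<\infty$ is used (it controls $\int_{-\infty}^t e^{z_1(t-s)}b\phi(s)\,ds$ since $z_1<0$ makes the exponential bounded but not decaying fast enough on its own near $-\infty$ relative to an arbitrary slowly-vanishing $\phi$), together with verifying the sandwich is strict so that $0<\psi<1$ genuinely holds.
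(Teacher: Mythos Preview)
Your approach is viable but there is a real gap in the existence step. The downward iteration from $\psi^{(0)}\equiv 1$ does not move: since $\psi\equiv 1$ solves the differential equation exactly, a direct computation gives $\mathcal{T}(1)=1$, so your ``sandwich'' collapses to $0\le\psi_*\le 1$ and says nothing about $\psi_*(-\infty)$. The upward iteration from $0$ does converge to the minimal fixed point $\psi_*$ of $\mathcal{T}$ in $[0,1]$, and your arguments for $\psi_*(+\infty)=1$, for $\psi_*<1$, and for strict monotonicity all go through (note that the monotonicity argument only requires $\psi_*<1$ and $\psi_*(+\infty)=1$, not the left boundary value). What is missing is a reason why $\psi_*(-\infty)=0$. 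One clean fix exploits the affine structure in $\psi$: if $m:=\psi_*(-\infty)>0$, then $\tilde\psi:=(\psi_*-m)/(1-m)$ is again a solution taking values in $[0,1)$, satisfies both boundary conditions, and is strictly smaller than $\psi_*$ (since $\psi_*<1$), contradicting minimality. Without such an argument your scheme does not yet produce a solution of the boundary value problem.

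The paper takes a more direct route that makes the linearity visible. Substituting $\theta=1-\psi$ reduces the problem to the linear homogeneous equation $\theta''-c\theta'-b\phi(t)\theta=0$ with $\theta(-\infty)=1$, $\theta(+\infty)=0$. Hartman's asymptotic theory at $+\infty$ yields a fundamental system $\theta_1,\theta_2$ with $\theta_i'/\theta_i\to\mu_i$, where $\mu_2<0<\mu_1$ are the roots of $z^2-cz-b=0$; every solution bounded at $+\infty$ is a positive multiple of $\theta_2$, which is shown to be positive and strictly decreasing by a maximum--principle argument. The hypothesis $\int_{-\infty}^0\phi<\infty$ enters through an asymptotic integration theorem (Eastham) for the limit equation $\theta''-c\theta'=0$ at $-\infty$, giving that $\theta_2(-\infty)$ is finite and positive; normalising by this value produces the unique $\theta$ with the required boundary data. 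The monotonicity properties (a) and (b) are then proved by maximum--principle arguments essentially identical to yours. Your fixed--point framework is more general purpose, but for this affine--linear problem the paper's linear ODE analysis reaches existence and the left boundary condition in one stroke, precisely where your iteration needs extra work.
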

\begin{proof}  The change of  
variables $\psi(t)=1-\theta(t)$, 
transforms the above problem into 
\begin{equation}\label{3adekul}
     \theta''(t) - c\theta'(t) -b \phi(t)\theta(t) =0,  \quad    \theta(-\infty)=1, \  \theta(+\infty)=0.  
\end{equation}
Since the limit equation of (\ref{3adekul}) at $+\infty$  is $\theta''(t) - c\theta'(t) -b\theta(t) =0$, by the Hartman asymptotic theory \cite[Theorem 17.4, p. 317]{H},  (\ref{3adekul}) has a fundamental system of solutions $\theta_1(t), \theta_2(t)$ such that  
$\lim_{t\to +\infty}\theta_i'(t)/\theta_i(t)= \mu_i$, $i=1,2,$ where $\mu_2<0<\mu_1$ are the roots of the characteristic equation 
$z^2-cz-b=0$.  {Clearly,  we choose $\theta_2$ such that $\theta_2(t) >0$ for all large positive $t$.  Consequently, each non-negative nontrivial solution to (\ref{3adekul}) vanishing at $+\infty$ has the form $\theta(t) = C\theta_2(t)$ for some $C>0$.  We claim that $\theta_2'(t)<0$ for all $t \in \R$. 
This property is obvious for large positive $t$.  Arguing by contradiction, let $s<0$ denotes the rightmost point 
where $\theta_2'(s) =0,$ then $\theta_2''(s) \leq 0$ and $\theta_2(s) >0$, all these relations are incompatible with (\ref{3adekul}) at the point $s$. 

 Next,  equation (\ref{3adekul}) is asymptotically autonomous at $-\infty$ and $\phi \in L_1(\R_-)$. Therefore, by the asymptotic integration theory (see e.g. Theorem 1.8.1 in \cite{MSPE}), since $0 < c$ are the only 
eigenvalues of the limiting equation $   \theta''(t) - c\theta'(t)=0$ at $-\infty$,  there exists the  positive finite limit $\lim_{t \to -\infty}\theta_2(t)$.   Clearly,  the unique 
solution satisfying also the boundary conditions in (\ref{3adekul}) is $\theta(t)= \theta_2(t)/\theta_2(-\infty).$ We define  $\frak{L}_b\phi=1-\theta.$ }

Now, assume $\phi_1(t) \leq \phi_2(t)$ and consider the corresponding solutions $\theta_1, \theta_2$ to  the problem (\ref{3adekul}). Then $\delta(t)= \theta_2(t)-\theta_1(t)$ satisfies the boundary value problem 
$$
  \delta''(t) - c\delta'(t) -b(\phi_2(t)\theta_2(t)-\phi_1(t)\theta_1(t)) =0, \ \delta(-\infty)= \delta(+\infty)=0.
$$
If $\delta(p) >0$ at some point $p,$ we can find a critical point $s$ such that 
$\delta(s) > 0,$ $\delta'(s)=0,$ $\delta''(s)\leq 0$. Since 
$$\phi_2(s)\theta_2(s)-\phi_1(s)\theta_1(s)=\phi_2(s)\delta(s)+(\phi_2(s)-\phi_1(s))\theta_1(s)>0,$$
we arrive to a contradiction with the above differential equation for $\delta(t)$ at $t=s$. Thus $\theta_2(t) \leq \theta_1(t), \ t \in \R$. 

Similarly, consider  $b_1<b_2$ and set  $\psi_j=\frak{L}_{b_j}\phi$.  Then $\sigma(t)= \psi_1(t)-\psi_2(t)$ satisfies the boundary value problem 
$$
  \sigma''(t) - c\sigma'(t) -\phi(t)(b_2\theta_2(t)-b_1\theta_1(t)) =0, \ \sigma(-\infty)= \sigma(+\infty)=0.
$$
If $\sigma(p) \geq 0$ at some point $p$ we can find a critical point $s$ such that 
$\sigma(s) \geq 0,$ $\sigma'(s)=0,$ $\sigma''(s)\leq 0$. Since 
$$b_2\theta_2(s)-b_1\theta_1(s)=b_2\sigma(s)+(b_2-b_1)\theta_1(s)>0,$$
we again obtain a contradiction. Thus $\psi_1(t) < \psi_2(t), \ t \in \R$. \qed 
\end{proof} 
Under appropriate conditions, linear operators commuting with the translations  have good continuous properties and can be represented as convolutions, see \cite{meis} and references therein. $\frak{L}_b$ is not defined on a linear space. Nevertheless, its monotonicity immediately implies a sort of  continuity.  We will say that the function $\zeta$ belongs to an $\epsilon-$neighbourhood of $\phi$ ($\zeta$, $\phi$ from the class defined in Lemma \ref{OWL}) if 
$\phi(t-\epsilon) \leq \zeta(t) \leq \phi(t+\epsilon)$, $t \in \R$.  This concept defines the convergence $\zeta_j \stackrel{\epsilon}\to \phi$ in a natural way: there exists a sequence $\epsilon_j \to 0^+$ such that 
$\phi(t-\epsilon_j) \leq \zeta_j(t) \leq \phi(t+\epsilon_j)$, $t \in \R$. Then, by the monotonicity property, 
$(\frak{L}_b\phi)(t-\epsilon_j) \leq (\frak{L}_b)\zeta_j(t) \leq (\frak{L}_b\phi)(t+\epsilon_j)$, so that $\frak{L}_b\zeta_j \stackrel{\epsilon}\to \frak{L}_b \phi$.  

It is also convenient to extend the domain of $\frak{L}_b$ to the steady states $\phi =0$ and $\phi=1$: for this we interpret $0, 1$ 
as the limit values $1=\lim_{j \to +\infty}\phi_j(t), 0 =\lim_{j \to -\infty}\phi_j(t)$, where $\phi_j(t)= \phi(t+j)$ and the limits  hold uniformly on each half-line $[q, +\infty)$. Then clearly $\lim_{j \to +\infty}\frak{L}_b\phi_j(t)=\lim_{j \to +\infty}\psi(j+t)=1$, $\lim_{j \to -\infty}\frak{L}_b\phi_j(t)=\lim_{j \to -\infty}\psi(j+t)=0$ on the same sets.  Consequently, we define  $\frak{L}_b 0=0, \frak{L}_b1 =1$. 

\section*{Acknowledgments}  \noindent  The work of Karel Has\'ik, Jana Kopfov\'a and Petra N\'ab\v{e}lkov\'a was supported  by the institutional support
for the development of research organizations I\v CO 47813059.
%This work was realized during a stay of Sergei Trofimchuk at the Silesian University in Opava,  Czech Republic. This stay was possible due to the support of  the Silesian University in Opava and the European Union through the project  CZ.02.2.69/0.0/0.0/16\_027/0008521.  
O. Trofymchuk and S. Trofimchuk  were  also supported by FONDECYT (Chile),   project 1190712. 
%\newpage 

\vspace{0mm}

\end{document}